\theoremstyle{plain}
\newtheorem{thm}{Theorem}
\newtheorem*{thmintro1}{Theorem~\ref{thm: JCR1StoCR1S}}
\newtheorem*{thmintro2}{Theorem~\ref{thm: JCR2SasCR2S}}
\newtheorem*{thmintro3}{Theorem~\ref{thm: ContBridgeThm}}
\newtheorem{cor}{Corollary}
\newtheorem*{corintro}{Corollary~\ref{cor: DGThmForCRSD}}
\newtheorem{lem}{Lemma}
\theoremstyle{remark}
\newtheorem{rem}{Remark}
\newtheorem{exm}{Example}
\theoremstyle{definition}
\newtheorem{defn}{Definition}
\newcommand{\R}{\mathbb{R}}
\newcommand{\s}{\mathbb{S}}
\newcommand{\D}{\mathbb{D}}
\newcommand{\A}{\alpha}
\newcommand{\dd}{\partial}
\newcommand{\z}{\zeta}
\begin{document}
\title{On Contact Round Surgeries on $(\mathbb{S}^3, \xi_{st})$ and their Diagrams}

\author{Prerak Deep}
\address{Department of Mathematics, Indian Institute of Science Education and Research Bhopal}
\curraddr{}
\email{prerakd@iiserb.ac.in}
\thanks{}

\author{Dheeraj Kulkarni}
\address{Department of Mathematics,  Indian Institute of Science Education and Research Bhopal}
\curraddr{}
\email{dheeraj@iiserb.ac.in}
\thanks{}

\subjclass[2020]{57K33, 57R65}

\keywords{Contact Structure, Round handle, Contact round surgery, Round surgery}

\date{}

\dedicatory{}

\begin{abstract}
We introduce the notion of contact round surgery of index $1$ on Legendrian knots in a general contact 3-manifold. It generalizes the notion of contact round surgery of index 1 on Legendrian knots introduced by Adachi. In $\left(\mathbb{S}^3, \xi_{st}\right)$, we introduce the notion of contact round 
surgery of index 2 on a Legendrian knot and realize Adachi's contact round 2-surgery on a convex torus as a contact round surgery of index $2$ on a Legendrian knot in $\left(\s^3, \xi_{st}\right)$. 
We associate surgery diagrams to contact round surgeries of indices 1 and 2 on Legendrian knots in $\left(\mathbb{S}^3, \xi_{st}\right)$. With this set-up, we show that every closed connected contact 3-manifold can be obtained by 
performing a sequence of contact round surgeries on some Legendrian link in $\left(\mathbb{S}^3, \xi_{st}\right)$, thus obtaining a contact round surgery 
diagram for each contact 3-manifold. This is analogous to the result of Ding-Geiges for contact Dehn surgeries. We also discuss a bridge between certain 
pairs of contact round surgery diagrams of indices 1 and 2, and contact $(\pm1)$-surgery diagrams. We use this bridge to establish the result mentioned above. In 
the end, we derive a corollary that gives sufficient conditions on contact round surgeries to produce symplectically fillable manifolds.

\end{abstract}

\maketitle

\section{introduction}

Dehn surgery is a well known method in topology to obtain a new 3-manifold from a given 3-manifold.
In the 1960s, Lickorish and Wallace (cf.  \cite{L, W}) independently proved that any connected closed orientable 3-manifold can be obtained by an integral Dehn surgery on a link in $\s^3$. 
This result is now known as the \textit{Lickorish-Wallace theorem}. 
Moreover, an integral Dehn surgery along a link in $\s^3$ can be encoded in terms of a framed link, i.e., a link with integers associated with each of its components.
Thus, we get framed link presentations of a closed connected orientable 3-manifold in $\s^3$ as a corollary to Lickorish-Wallace theorem. 

In 1975, Asimov introduced round surgery on an $n$-manifold in his seminal work \cite{Asimov}. 
As an application to 3-manifolds, he proved that any compact connected 3-manifold can be obtained by a finite sequence of round surgeries on $\s^3$. 
However, unlike the Lickorish-Wallace theorem, Asimov's theorem does not provide a presentation of round surgeries in terms of framed link in $\s^3$. 
The authors of this article addressed this gap in \cite{PD-RSD} by introducing round surgery diagrams similar to Dehn surgery diagrams. 
In particular, they showed that round surgeries on $\s^3$ can be encoded by framed links and further proved that every closed connected orientable 3-manifold can be obtained by round surgeries of indices 1 and 2 along a framed link in $\s^3$. 
Thus, we get round surgery diagrams representing 3-manifolds.  

In the world of contact 3-manifolds, a parallel set of results were proved.
In 1991, Weinstein (cf.~\cite{Weinstein}) introduced Legendrian surgery on a Legedrian link in a contact 3-manifold by observing the effect of symplectic handle addition to symplectic 4-manifolds on its boundary which is a contact 3-manifold. 
Later, in 1998, Gompf (cf.~\cite{Gompf-HBC_SteinSurface}) realized it as contact $(-1)$-surgery on a Legendrian knot. 
In 2001, Ding and Geiges (cf. \cite{DG-SymplecticFillings})  introduced rational contact surgery on Legendrian knots. 
Further, they proved that contact $(\pm1)$-surgeries were sufficient to obtain any closed connected contact 3-manifold from the standard contact 3-sphere. 
Their result is stated as follows. 
\begin{thm}[Ding-Geiges Theorem, \cite{DG}]\label{thm: LegPresnthm}
    Any closed connected (co-orientable) contact 3-manifold can be obtained by a contact $(\pm1)$-surgery on a Legendrian link in the standard contact 3-sphere.  
\end{thm}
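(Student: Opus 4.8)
The plan is to prove the theorem through open book decompositions, converting a factorization of the monodromy into Dehn twists into a sequence of contact $(\pm1)$-surgeries. Let $(M,\xi)$ be a closed connected co-orientable contact 3-manifold. By the Giroux correspondence, $\xi$ is supported by an open book with page a compact surface $\Sigma$ with nonempty boundary and monodromy $\phi$, a diffeomorphism of $\Sigma$ fixing $\dd\Sigma$ pointwise. First I would use that the mapping class group of $\Sigma$ is generated by Dehn twists to factor $\phi = \tau_{c_1}^{\epsilon_1}\cdots\tau_{c_n}^{\epsilon_n}$, where each $c_i\subset\Sigma$ is an embedded simple closed curve, $\tau_{c_i}$ the right-handed Dehn twist about $c_i$, and $\epsilon_i\in\{+1,-1\}$.

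The heart of the argument is the dictionary relating Dehn twists to contact surgery. A simple closed curve $c$ on a page can be realized as a Legendrian knot $\Lambda_c$ whose contact (Thurston--Bennequin) framing agrees with the page framing, and contact $(-1)$-surgery (respectively $(+1)$-surgery) on $\Lambda_c$ produces the contact manifold supported by the open book whose monodromy is post-composed with $\tau_c$ (respectively $\tau_c^{-1}$). Granting this, I would interpolate through the open books $(\Sigma,\mathrm{id})$, $(\Sigma,\tau_{c_1}^{\epsilon_1})$, $\dots$, $(\Sigma,\phi)$, realizing each step as a single contact $(\pm1)$-surgery on $\Lambda_{c_i}$, with the sign of the surgery dictated by $\epsilon_i$.

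It then remains to exhibit the contact manifold supported by $(\Sigma,\mathrm{id})$ as a contact surgery on $(\s^3,\xi_{st})$. This manifold is $\#^{k}(\s^1\times\s^2)$ with its standard tight contact structure, where $k = 2g+b-1$ for $\Sigma$ of genus $g$ with $b$ boundary components. Since contact $(+1)$-surgery on a Legendrian unknot with $tb=-1$ produces $(\s^1\times\s^2,\xi_{st})$, this connected sum arises from $(\s^3,\xi_{st})$ by contact $(+1)$-surgery on a $k$-component Legendrian unlink. Concatenating this with the surgeries of the previous step, and assembling all surgery curves into one Legendrian link in $(\s^3,\xi_{st})$ by the standard procedure for composing successive surgeries, yields a single contact $(\pm1)$-surgery diagram for $(M,\xi)$.

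The hard part will be establishing the surgery-to-Dehn-twist dictionary with correct framing bookkeeping: using the local handle model, one must check that contact $(\pm1)$-surgery on a page curve post-composes the monodromy with exactly $\tau_c^{\pm1}$ while preserving the open book that supports the surgered contact structure, which rests on the page being convex and on the coincidence of page framing and contact framing. The deeper point that this route handles automatically is matching $\xi$ exactly, rather than merely recovering the underlying smooth manifold $M$; a purely topological route through the Lickorish--Wallace theorem and Legendrian realization of a framed link would leave this as the crux. As a supporting tool I would keep in reserve the rational surgery calculus, which reduces an arbitrary contact $r$-surgery to a chain of contact $(\pm1)$-surgeries via the continued fraction expansion of $r$ together with Legendrian stabilizations, in case it is more convenient to first arrange integer surgery coefficients.
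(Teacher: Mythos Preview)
The paper does not give its own proof of this theorem: it is stated in the introduction as a known result, attributed to Ding and Geiges with a citation to \cite{DG}, and is used later only as a black box in the proof of Corollary~\ref{cor: DGThmForCRSD}. So there is no in-paper argument to compare against.

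That said, your proposed argument via the Giroux correspondence is a correct and by now standard route to the statement. The original Ding--Geiges proof predates the full strength of Giroux's theorem and proceeds differently: it starts from a topological surgery presentation of $M$ (Lickorish--Wallace), realizes the surgery link as Legendrian, uses the rational-to-$(\pm1)$ reduction, and then adjusts the contact structure using Eliashberg's classification of overtwisted structures together with a cancellation lemma showing that a contact $(+1)$-surgery followed by a contact $(-1)$-surgery on a Legendrian push-off is trivial. Your open-book approach bypasses the overtwisted classification entirely and makes the matching of $\xi$ automatic, at the cost of invoking Giroux; the original argument is more elementary on the geometric side but relies on deeper homotopy-theoretic input. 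Both are valid, and the point you flag as ``the hard part'' (the surgery/Dehn-twist dictionary and the coincidence of page and contact framing) is indeed where the care is needed in your route.
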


The above result paves the way for presentation of contact 3-manifolds in terms of framed Legendrian links. Thus, we have contact surgery diagrams. It is important to note that a given contact 3-manifold admits infinitely many surgery diagrams similiar to the case of Dehn surgery diagrams for (topological) 3-manifolds. Kirby calculus on framed links provides a complete answer about when two Dehn surgery diagrams represent the same 3-manifold. Kirby calculus consists of two topological moves on framed links--known as Kirby move of Type 1 and Type 2.

In \cite{DG2}, Ding-Geiges provided a contact analogue of Kirby move of type 2 along with a few handle moves on a contact surgery diagram in a standard contact 3-sphere. 
Along the same lines, the authors of this article defined the contact analogue of Kirby move of type 1 in \cite{PD-CFKM}.

Round surgery has been studied for contact manifolds by Jiro Adachi in \cite{Jiro, Jiro17, Jiro19}.
In \cite{Jiro17}, Adachi introduced round 1-surgery on a transverse link and contact round 2-surgery on a convex torus. Using these contact round surgeries, he provided an alternate proof to Martinet's theorem and a contact version of Asimov's theorem.

In \cite{Jiro}, Adachi introduced symplectic round handles and their attachments. 
Like Weinstein's contact surgery, he defined contact round surgery on a contact $(2n+1)$-manifold $\left(M, \xi \right)$ as the change in the boundary $M\times \{1\}$ after attaching a symplectic round handle on the symplectization $M\times [0,1]$ of $M$. 
He proved that these contact round surgeries preserve the strong symplectic fillability of $\left(M, \xi \right)$. 
On a contact 3-manifold $M$, these contact round surgeries become equivalent to Legendrian contact round surgery, analogous to Weinstein Legendrian surgery.  
Moreover, in \cite{Jiro19}, Adachi defines contact round 1-surgery on a contact 3-manifold given by the above contact round surgery, i.e. using symplectic round handle attachment. 
In this article, we will refer to this surgery as \textit{Legendrian round surgery}.
In the same article, Adachi further defined another version of contact round 2-surgery along a convex torus in a contact 3-manifold $M$ as the change in the concave end $M\times \{0\}$ of the symplectization $M\times [0,1]$ after attaching the symplectic round 1-handle along their belt spheres. 
In this article, we are working with this version of contact round surgery of index 2. 
In Theorem \ref{thm: JCR2SasCR2S}, we realize this version of contact round surgery of index 2 along a \emph{convex torus} in $\left(\s^3, \xi_{st}\right)$ as a contact round surgery of index $2$ on a \emph{Legendrian knot} in the standard contact 3-sphere.

Noting that the development of notions and results for contact round 1-surgery is parallel to the case of Weinstein's Legendrian surgery, it is natural to ask 
whether we can describe a general contact round 1-surgery on a contact 3-manifold such that Legendrian round surgery becomes a special case or class of it, similar to the case of Legendrian surgery being contact $(-1)$-surgery. 
Also, we want to realize a contact 2-surgery on the standard contact 3-sphere as a contact 2-surgery performed on a framed Legendrian knot in $\left(\s^3, \xi_{st}\right)$, 
generalizing the description of round 2-surgery on a framed knot to the standard contact 3-sphere.
Further, one expects to arrive at a theorem similar to Ding-Geiges' result.

In this article, we give the most general form of contact round 1-surgery on Legendrian links in $\left(\mathbb{S}^3, \xi_{st}\right)$ along the lines of rational contact surgery discussed by Ding-Geiges. 
In particular, we realize Legendrian round surgery as a special class of these contact round 1-surgeries as follows. 

\begin{thmintro1}
Legendrian round surgery on a Legendrian link with two components $L= L_1\cup L_2$ in $(M, \xi)$ is equivalent to a contact round 1-surgery on $L$ in $(M, \xi)$ with the identical contact round 1-surgery coefficients on each component and $\mathbb{T}^2 \times [1,2]$ with invariant contact structure on it.  
\end{thmintro1}

Further, following the description of round 2-surgery on a knot in $\s^3$ from \cite{PD-RSD}, we define a contact round 2-surgery along a Legendrian knot in the standard contact 3-sphere.
We also realize Adachi's contact round 2-surgery on an standard convex torus in $\left(\s^3, \xi_{st}\right)$ as a contact round 2-surgery on a framed Legendrian knot. This result can be stated as follows.  
\begin{thmintro2}
Adachi's contact round 2-surgery on a convex torus $T\subset (\mathbb{S}^3, \xi_{st})$ is same as a contact round 2-surgery on a Legendrian knot $K$ in $\left(\s^3, \xi_{st}\right)$ with integer round surgery coefficient, where $\dd N_{\delta}(K)= T$ for  $\delta>0$. 
Moreover, the integer $n$ depends only on the choice of the surgery meridian. 
\end{thmintro2}

As a result, we can associate framed Legendrian knots to contact round surgeries. 
It is then natural to ask whether there is a relation between contact round surgeries and the contact Dehn surgeries.
In particular, we prove that a certain set of contact round surgeries on $\left(\s^3, \xi_{st}\right)$ are equivalent to the contact $(\pm1)$-surgeries on $\left(\s^3, \xi_{st}\right)$. 
We call this result the contact bridge theorem. We state it as follows. 

\begin{thmintro3}[Contact Bridge Theorem]
The following two statements establish a bridge between a set of contact $(\pm1)$-surgery diagrams and contact round surgery diagrams of nice contact joint pairs. 
\begin{enumerate}
    \item [(1)] For a contact $(\pm1)$-surgery diagram on a Legendrian link $L_1 \cup \cdots \cup L_n \subset \s^3$ of a contact 3-manifold $(M, \xi)$, there is a round surgery diagram consisting of contact joint pairs $L= \bigcup_{i=1}^{n'} (L_{i1}\cup L_{i2})$ with round 1-surgery coefficients $k\in \mathbb{Z} $ on both components $L_{1j}$ and round 2-surgery coefficient $m \in \{\pm1\}$ on $L_{i2}$. 
    \item [(2)]Given a contact round surgery diagram of nice contact joint pairs $L= \bigcup_{i=1}^{n} (L_{1i}\cup L_{i2})$ satisfying the following conditions: 
    \begin{enumerate}
        \item The round 1-surgery coefficient is $k \in  \mathbb{Z}$ on both components, 
        \item and round 2-surgery coefficient is $m\in \{\pm1\}$ on $L_{i2}$. 
    \end{enumerate}
    The Legendrian link $L= \bigcup_{i=1}^{n} (L_{1i}\cup L_{i2})$ determines a contact $(\pm1)$-surgery diagram such that for each $i,j$; $L_{ij}$ has contact surgery coefficient $m$. 
\end{enumerate}
    
\end{thmintro3}

In the existing literature, there is no Legendrian surgery presentation explored for contact round surgeries on the standard contact 3-sphere, as far as we know. 
In this article, we show that a closed connected contact 3-manifold admits a Legendrian round surgery presentation in $\left(\s^3, \xi_{st}\right)$ similar to the fact that it admits a Legendrian surgery presentation.  
We achieve this result using the contact bridge theorem and the Ding-Geiges theorem. The result can be stated as follows. 
\begin{corintro}
    Any closed connected contact 3-manifold has a Legendrian round surgery presentation in $\left(\s^3, \xi_{st}\right)$. 
\end{corintro}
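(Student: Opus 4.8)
The plan is to obtain the presentation by composing the Ding--Geiges Theorem (Theorem~\ref{thm: LegPresnthm}) with the Contact Bridge Theorem (Theorem~\ref{thm: ContBridgeThm}). Let $(M,\xi)$ be an arbitrary closed connected contact 3-manifold. First I would invoke Theorem~\ref{thm: LegPresnthm} to write $(M,\xi)$ as the result of a contact $(\pm1)$-surgery on some Legendrian link $L_1 \cup \cdots \cup L_n \subset (\s^3, \xi_{st})$. This reduces the problem to re-expressing a given contact $(\pm1)$-surgery diagram as a round surgery diagram whose index-$1$ pieces are genuine Legendrian round surgeries, which is precisely the kind of translation the Contact Bridge Theorem is designed to perform.

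Next I would feed this diagram into part~(1) of the Contact Bridge Theorem. By construction, part~(1) replaces $L_1 \cup \cdots \cup L_n$ by a collection of contact joint pairs $L = \bigcup_{i=1}^{n'} (L_{i1} \cup L_{i2})$ carrying round $1$-surgery coefficient $k \in \mathbb{Z}$ on each of the two components and round $2$-surgery coefficient $m \in \{\pm 1\}$ on $L_{i2}$, and yields the same contact $3$-manifold $(M,\xi)$. Since the two components of each joint pair share the identical integer round $1$-surgery coefficient, Theorem~\ref{thm: JCR1StoCR1S} lets me reinterpret each such index-$1$ contact round surgery, together with the attached $\mathbb{T}^2 \times [1,2]$ equipped with its invariant contact structure, as a Legendrian round surgery. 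Thus every round $1$-surgery in the diagram is a genuine Legendrian round surgery, while the remaining index-$2$ surgeries have coefficients $m \in \{\pm1\}$ as demanded by the round surgery diagram conventions set up above. Assembling these, the contact joint pairs constitute a Legendrian round surgery presentation of $(M,\xi)$ in $(\s^3, \xi_{st})$.

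The individual steps are formal once the two theorems are in place, so I expect the main obstacle to lie not in the composition but in verifying that the hypotheses match at the seam. Concretely, the delicate point is checking that the output of part~(1) of the Contact Bridge Theorem genuinely satisfies the ``identical coefficients on both components'' hypothesis of Theorem~\ref{thm: JCR1StoCR1S}, and that the auxiliary $\mathbb{T}^2 \times [1,2]$ piece produced there carries exactly the invariant contact structure required by that theorem; any mismatch in framings or in the co-orientation would have to be absorbed before the reinterpretation is legitimate. I would also explicitly record that the contactomorphism type is preserved at each stage---by Theorem~\ref{thm: LegPresnthm} in the first step, and by the equivalences supplied by the Contact Bridge Theorem and Theorem~\ref{thm: JCR1StoCR1S} in the second---so that the final round surgery diagram presents the same $(M,\xi)$ we started with.
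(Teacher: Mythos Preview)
Your proposal is correct and follows essentially the same approach as the paper: apply Ding--Geiges to get a contact $(\pm1)$-surgery diagram, then use part~(1) of the Contact Bridge Theorem to convert it into a round surgery diagram of nice contact joint pairs. The paper's proof stops there, whereas you additionally invoke Theorem~\ref{thm: JCR1StoCR1S} to identify the index-$1$ surgeries with Adachi's Legendrian round surgeries; this extra step is not required by the paper's definition of a contact round surgery presentation, but your concern about the seam is well-founded and in fact resolved by Lemma~\ref{lem: ResOfStdTightCont}, whose Claim~2 shows that the holonomy-$0$ non-rotative structure used for nice joint pairs is precisely the $I$-invariant structure demanded by Theorem~\ref{thm: JCR1StoCR1S}.
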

   

This article is organized as follows. 
In Section 2, we start by recalling round surgery on links in $\s^3$ as discussed in \cite{PD-RSD}. Then, we discuss relevant notions from contact topology. We end this section with a brief overview of the contact Kirby move of type 1 from our article \cite{PD-CFKM}. 
In Section 3, we introduce contact round 1-surgery along a Legendrian link with two components in a contact 3-manifold and contact round 2-surgery along a Legendrian knot in the standard contact 3-sphere. 
We end Section $3$ with a discussion on the distinction of contact round 1-surgery and Adachi's Legendrian round surgery. 
In Section 4, we establish a correspondence between contact round surgeries and contact $(\pm1)$-surgeries and prove the contact bridge theorem. At the end, we discuss the Legendrian round surgery presentation of any closed connected contact 3-manifold in the standard contact 3-sphere .

\section*{Acknowledgement}
The first author is supported by the Prime Minister Research Fellowship-0400216, Ministry of Education, Government of India. The authors thank the referee(s) for suggesting many improvements in the content and exposition of this article. 

\section{Preliminaries}
The preliminaries are organized into three subsections. The first subsection briefly discusses the topological round surgery. In the second subsection, we 
recall relevant notions from the theory of contact 3-manifolds, namely, Legendrian knots, the theory of convex surfaces and Honda's classification of 
tight contact structures on thickened tori (see Theorem \ref{thm: ClassOfTgtCSonT2xI}). In the end, we recall the contact Kirby move of type 1 in Subsection \ref{subsec: Pre-CFKM}. The reader may skip the familiar subsections.

\subsection{Round surgery}
In \cite{Asimov}, Asimov defined round surgeries on an $n$-manifold as follows.  
\begin{defn}
Let $N$ be an $n$-manifold. 
Let $\phi : \s^1 \times \s^{k-1} \times \D^{n-k} \to N $ be an embedding. 
A round $k$-surgery on $N$ is the operation of removing the embedded region $\phi\left(\s^1 \times \s^{k-1} \times \D^{n-k}\right) $ from $N$ and gluing $\s^1 \times \D^{k} \times \s^{n-k-1}$ to get a new $n$-manifold
$$M := \overline{N \setminus \phi(\s^1 \times \s^{k-1}\times \D^{n-k})} \bigcup_{\text{id}} \left(\s^1 \times \D^{k} \times \s^{n-k-1}\right)$$
for $0\leq k\leq n-1$. 
The manifold $M$ is said to be obtained by performing a round $k$-surgery (or round surgery of index $k$) on $N$ along the embedding $\phi$. 
\end{defn}

In \cite{PD-RSD}, we defined round surgeries of indices 1 and 2 on links in $\s^3$ with rational round surgery coefficients. 
These surgery coefficients are determined by the coefficients of simple closed curves on the corresponding boundary tori around the components of a link. 
We know that any simple closed curve on a torus can be expressed with respect to the meridian $\mu$ and the chosen longitude. 
In case of a knot $K\subset \s^3$,  the tubular neighborhood $N(K)$ is diffeomorphic to a solid torus. 
On $\partial N(K)$, there is a canonical choice of longitude $\lambda$ given by a Seifert surface of $K$. 
We call $\lambda$ the canonical longitude.  
Thus, we can express any simple closed curve $c$ with respect to the meridian $\mu$ and the canonical longitude $\lambda$ as follows: 
$$ c= p\cdot \mu + q\cdot \lambda.$$ 
Moreover, the canonical longitude $\lambda$ and the meridian $\mu$ can be used to identify the boundary torus with $\R^2/\mathbb{Z}^2$ by identifying  $\mu$ to $(1,0)$ and $\lambda$ to $(0,1)$.  
Therefore, curves $\mu$ and $\lambda$ determine a coordinate system on the boundary torus, called the \textit{canonical coordinate system}.

\subsubsection{Round 1-surgery on a link with two components}
Suppose $L_1\cup L_2$ denotes a link with two components in $\s^3$. Let integers $n_1$ and $n_2$ be round 1-surgery coefficients on $L_1$ and $L_2$, respectively.  
We remove the tubular neighborhood $N(L_i)$ of each component from $\s^3$. We obtain the link-complement $$N:=\overline{\s^3\setminus(N(L_1)\cup N(L_2))}.$$ 
Now, we glue a thickened torus $\s^1 \times \D^1\times \s^1$ to $N$ by the gluing map $\phi: \dd(\s^1 \times \D^1 \times \s^1) \to \dd N$.  
The thickened torus has two boundary torus $\s^1 \times \{\pm1\}\times \s^1$, here we treat $\s^0 = \{\pm1\}$.
The boundary torus $\s^1 \times \{(-1)^i\} \times \s^1$ glues to $\partial N(L_i)$. 
The gluing map $\phi$ maps the curve $\{q\} \times \{(-1)^i\} \times \s^1$ to the meridian $\mu_i$ of $L_i$ and the curve $\s^1 \times \{(-1)^i\} \times \{p\}$ is mapped to the curve $n_i \cdot \mu_i + \lambda_i$, where $\lambda_i$ denotes the canonical longitude of the link component $L_i$ and for some $p,q \in \s^1$.

We denote by $M_{(L^{n_1}_1 \cup L^{n_2}_2)}$ the $3$-manifold obtained by performing a round $1$-surgery on $L_1 \cup L_2$ with round $1$-surgery coefficients $n_1$ and $n_2$ on $L_1$ and $L_2$, respectively.

\begin{thm}[Lemma 4, \cite{PD-RSD}]\label{thm: SameClassOfR1SD}
Suppose some integers $n_1, n_2, n^{\prime}_1$ and $n^{\prime}_2$ satisfies $n_1-n_2 = n_1^{\prime}-n^{\prime}_2$. Then,
$$M_{(L_1^{n_1}\cup L_2^{n_2})}\text{ is diffeomorphic to } M_{(L_1^{n^{\prime}_1}\cup L_2^{n^{\prime}_2})}.$$ \end{thm}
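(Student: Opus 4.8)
The plan is to realize the passage from the surgery coefficients $(n_1,n_2)$ to $(n_1',n_2')$ as the effect of pre-composing the gluing map $\phi$ with a self-diffeomorphism of the glued-in thickened torus $B:=\s^1\times\D^1\times\s^1$, and then to invoke the standard principle that $N\cup_\phi B$ is diffeomorphic to $N\cup_{\phi\circ\Psi|_{\dd B}} B$ for any self-diffeomorphism $\Psi$ of $B$. First I would rewrite the hypothesis $n_1-n_2=n_1'-n_2'$ in the equivalent form $n_1'-n_1=n_2'-n_2=:m$ for a single integer $m$; it then suffices to prove that increasing both coefficients by the same integer $m$ leaves the diffeomorphism type unchanged.

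Writing $\theta_1$ and $\theta_2$ for the coordinates on the first and the last $\s^1$ factors of $B$, I would take $\Psi_m\colon B\to B$ to be the $m$-fold Dehn twist $(\theta_1,t,\theta_2)\mapsto(\theta_1,t,\theta_2+m\theta_1)$. This is well defined since $m\in\mathbb{Z}$, it fixes each boundary torus $\s^1\times\{(-1)^i\}\times\s^1$ setwise, and on each of them it fixes the $\theta_2$-circle while sending the $\theta_1$-circle to a curve homologous to $[\theta_1\text{-circle}]+m[\theta_2\text{-circle}]$. Since $\phi$ sends the $\theta_2$-circle to $\mu_i$ and the $\theta_1$-circle to $n_i\mu_i+\lambda_i$, the new gluing $\phi\circ(\Psi_m|_{\dd B})$ still sends the $\theta_2$-circle to $\mu_i$ but sends the $\theta_1$-circle to $(n_i\mu_i+\lambda_i)+m\mu_i=(n_i+m)\mu_i+\lambda_i$. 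Thus $\phi\circ(\Psi_m|_{\dd B})$ is exactly the gluing map defining the round $1$-surgery with coefficient $n_i+m=n_i'$ on each component, and the map that restricts to the identity on $N$ and to $\Psi_m^{-1}$ on $B$ descends to the desired diffeomorphism $M_{(L_1^{n_1}\cup L_2^{n_2})}\cong M_{(L_1^{n_1'}\cup L_2^{n_2'})}$.

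The step to be careful about is the bookkeeping of the boundary action, and it also explains why the \emph{difference} rather than the individual coefficients is the invariant: a self-diffeomorphism of $\s^1\times\D^1\times\s^1$ that preserves the two ends must induce one and the same automorphism of $H_1$ on both boundary tori once they are identified via the product framing. Consequently a single Dehn twist is forced to shift both coefficients by the same amount $m$, which is precisely the move that preserves $n_1-n_2$. The main obstacle is therefore to verify, with the canonical coordinate system fixed, that the chosen twist $\Psi_m$ produces the shift $+m$ with the correct sign \emph{simultaneously} on both components $L_1$ and $L_2$; once this orientation check is in place, the gluing principle finishes the proof.
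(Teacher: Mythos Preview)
Your proposal is correct. Note, however, that the present paper does not supply its own proof of this statement: it is quoted as Lemma~4 from the authors' companion paper \cite{PD-RSD} and only invoked here, so there is nothing in this source to compare against at the level of argument. That said, the Dehn-twist argument you give is exactly the standard one and is almost certainly what appears in \cite{PD-RSD}: the self-diffeomorphism $\Psi_m(\theta_1,t,\theta_2)=(\theta_1,t,\theta_2+m\theta_1)$ of $\s^1\times\D^1\times\s^1$ shifts both boundary framings by the same integer $m$, and the gluing principle $N\cup_\phi B\cong N\cup_{\phi\circ\Psi|_{\partial B}}B$ yields the diffeomorphism. Your bookkeeping of the curves against the conventions set out in the paper (namely $\{q\}\times\{(-1)^i\}\times\s^1\mapsto\mu_i$ and $\s^1\times\{(-1)^i\}\times\{p\}\mapsto n_i\mu_i+\lambda_i$) is accurate, and your observation that a product-preserving self-diffeomorphism of the thickened torus must induce the \emph{same} $H_1$-automorphism on both ends is precisely why only the difference $n_1-n_2$ matters. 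The orientation caveat you flag is genuine but routine: since $\Psi_m$ is globally defined on $B$ (not just on $\partial B$) and is independent of $t$, the sign of the shift is automatically the same on both boundary components.
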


Thus, round 1-surgeries on the same link with round 1-surgery coefficients on both components to be the same integer produce the diffeomorphic 3-manifolds.

\begin{exm}\label{exm: R1SD}
A round 1-surgery diagram on the Hopf link with the round 1-surgery coefficients $-1$ on both components produces a circle bundle over torus $\s^1 \tilde{\times} \mathbb{T}^2$.
\end{exm}

We have also observed the following. 

\begin{rem}
We can not obtain a 3-torus by performing a single round 1-surgery on a Hopf link in $\s^3$. 
\end{rem}

\subsubsection{Round 2-surgery on a knot}
Suppose $K$ is a knot in $\s^3$ and $p/q\in \mathbb{Q}\cup \{\infty\}$ is the round 2-surgery coefficient. 
Let $N_d(K)$ denotes a tubular neighborhood of $K$ of some radius $d$. 
For some $0<d_1 < d_2 < d$, we remove the thickened torus $N_{d_2} \setminus N_{d_1}$ from $\s^3$ and obtain
$$N : = \overline{\s^3 \setminus \left(N_{d_2} \setminus N_{d_1}\right)}.$$
Now, we glue two copies of a solid torus, i.e., $\s^1 \times \D^2 \times \s^0$. 
For some $q \in \s^1$, we map the meridians $\{q\}\times \partial \D^2 \times \{(-1)^i\}$ to $p \cdot \mu_i + q \cdot \lambda_i$, where $\mu_i$ and $\lambda_i$ denote the canonical coordinates system on $\partial N_{d_i}$ for $i=1,2$. 

\begin{exm}\label{exm: R2SD}
A round 2-surgery on $\s^3$ producing $\s^3 \sqcup (\s^1 \times \s^2)$ can be presented by a $0$-framed unknot. 
\end{exm}

In \cite{PD-RSD}, we proved that round surgeries on links on $\s^3$ are the same as Asimov's round surgery on $\s^3$. 
The statement is as follows. 
\begin{thm}[Lemma 3 and 6, \cite{PD-RSD}]\label{Thm: RSD}
A round 1-surgery on $\s^3$ can be determined entirely by a two-component framed link in $\s^3$. And a round 2-surgery on $\mathbb{S}^3$ can be determined by a knot in $\s^3$ with a rational coefficient. 
\end{thm}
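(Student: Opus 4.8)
The plan is to show that each of Asimov's abstract round surgeries on $\s^3$ splits into two pieces of data---a \emph{removed region} and a \emph{gluing map}---and that for indices $1$ and $2$ both pieces reduce to the combinatorial data of a framed link. I would treat the removal and the gluing separately: first recognize the removed region as a tubular neighborhood of a link (resp.\ a concentric shell around a knot), thereby extracting the underlying link (resp.\ knot); then analyze the attaching diffeomorphism up to isotopy and up to those diffeomorphisms that extend over the glued-in piece, so that only finitely many integer (resp.\ rational) slope parameters remain.

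For round $1$-surgery, the embedding $\phi\colon \s^1\times\s^0\times\D^2\to\s^3$ is precisely an embedding of two disjoint solid tori, whose cores form a two-component link $L_1\cup L_2$; since an embedded solid torus is equivalent to a framed knot, the removed region records the link together with a framing on each component. For the glued-in thickened torus $\s^1\times\D^1\times\s^1 = \mathbb{T}^2\times I$, I would use that $\mathbb{T}^2\times I$ is a collar, so that gluing it onto the two torus boundary components of the link complement $N=\overline{\s^3\setminus(N(L_1)\cup N(L_2))}$ amounts to directly identifying $\partial N(L_1)$ with $\partial N(L_2)$ by a single diffeomorphism $h$. Up to isotopy, $h$ is an element of $\mathrm{SL}(2,\mathbb{Z})$ relative to the canonical bases $(\mu_i,\lambda_i)$. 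The meridian-to-meridian normalization built into the construction forces $h(\mu_1)=\mu_2$, leaving only the longitudinal images $n_i\mu_i+\lambda_i$; the integers $n_1,n_2$ are exactly the sought framings, so the surgery is encoded by $(L_1\cup L_2,n_1,n_2)$.

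For round $2$-surgery, the embedding $\phi\colon\s^1\times\s^1\times\D^1\to\s^3$ is an embedding of a thickened torus, and its core is an embedded torus $T\subset\s^3$. Here I would invoke the classical fact that every embedded torus in $\s^3$ bounds a solid torus on at least one side; taking the core of that solid torus to be a knot $K$, the thickened torus is isotopic to a concentric shell $N_{d_2}(K)\setminus N_{d_1}(K)$, so the removed data is exactly the knot $K$. The glued-in $\s^1\times\D^2\times\s^0$ is a pair of solid tori, each attached by a Dehn-filling map; such a map is determined up to isotopy by the image slope of its meridian $\{*\}\times\partial\D^2$, namely $p\mu_i+q\lambda_i$, with the remaining freedom absorbed by diffeomorphisms extending over the solid torus. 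The round structure (the common $\s^1\times\D^2$ factor across the two copies indexed by $\s^0$) forces the same rational slope $p/q$ on both fillings, so the surgery is encoded by $(K,p/q)$.

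The main obstacle I anticipate lies in the gluing analysis rather than in identifying the removed regions: I must verify that, after quotienting by the diffeomorphisms that extend over the glued-in piece, precisely the recorded slope parameters survive and the resulting $3$-manifold is well defined. In the round $1$ case this means justifying the collar-collapse reduction and checking that the product structure of $\mathbb{T}^2\times I$ leaves exactly the two integer framings free---consistent with Theorem~\ref{thm: SameClassOfR1SD}, which further collapses these to the single invariant $n_1-n_2$. In the round $2$ case it means confirming that the embedded torus genuinely arises as the boundary of a knot neighborhood (via the bounding-solid-torus fact) and that the two solid-torus attachments are governed by a single rational coefficient.
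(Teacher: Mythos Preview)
The paper does not prove Theorem~\ref{Thm: RSD}; it is quoted from \cite{PD-RSD} and the proof is explicitly deferred to that article. There is therefore no in-paper argument to compare against beyond the constructions of round $1$- and round $2$-surgery on framed links recalled in the preliminaries.

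Your sketch is sound and aligns with those constructions. For index~$1$, the point that the $\partial\D^2$-factor in $\s^1\times\s^0\times\D^2$ bounds a meridional disk in each embedded solid torus---so that under the identity gluing the last $\s^1$-factor of the thickened torus is forced onto $\mu_i$---is exactly the mechanism behind the paper's gluing prescription, and the residual freedom is the pair of framings $(n_1,n_2)$. For index~$2$, invoking the classical fact (Alexander) that every embedded torus in $\s^3$ bounds a solid torus on one side is the standard way to extract the knot $K$, after which the two fillings share a single slope because the curves $\phi(\{*\}\times\s^1\times\{\pm1\})$ are isotopic through the product. Two minor points worth tightening: your collar-collapse in index~$1$ actually shows the result depends only on $n_1-n_2$ (as in Theorem~\ref{thm: SameClassOfR1SD}), so the framed-link encoding is redundant but still suffices for the ``can be determined by'' assertion; and in index~$2$, when $T$ is a Heegaard torus the choice of $K$ is not canonical, though again this does not affect the statement.
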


A proof of the above theorem can be found in the original article \cite{PD-RSD}.


The link diagrams can be thought of as an analogue of the Dehn surgery diagrams for the round surgeries.

\begin{rem}[Round 2-surgery yields a disconnected 3-manifold]
In round 2-surgery, we remove an embedded thickened torus from $\s^3$. 
It produces a 3-manifold with two components: one is the solid torus, and the other is the knot complement of the core curve of the solid torus. After gluing two copies of solid tori, we obtain two components, namely, a Lens space and a closed connected 3-manifold.
\end{rem}

\subsubsection{Joint pair of round surgeries of indices 1 and 2}
Suppose $L= L_1 \cup L_2$ is a round 1-surgery link in $\s^3$ with round 1-surgery coefficients $n_1$ on $L_1$ and $n_2$ on $L_2$. 
Since a single round 2-surgery yields a disconnected 3-manifold, we observe that we need to perform a round 2-surgery along one of the components of a round 1-surgery link to obtain a connected 3-manifold from round surgeries of indices 1 and 2. 
In particular, we perform round 2-surgery along the thickened torus parallel to the boundary $\dd N(K_2)$. We obtain a connected 3-manifold after removing this thickened torus, as discussed in the following lemma. 

\begin{lem}[Lemma 7 in \cite{PD-RSD}]\label{lem: RemovedTorusNbd}
Removing the thickened torus $\mathbb{T}^{2} \times I$ parallel to the $\dd N(K_2)$ from $M_{L^{n_1}_{1}\cup L^{n_2}_2}$ nullifies the effect of the glued thickened torus in round 1-surgery, i.e.
$$
\overline{M_{L^{n_1}_{1}\cup L^{n_2}_2} \backslash \left(\mathbb{T}^{2} \times I\right)} \cong \overline{\s^{3} \backslash \left(N(L_1)\cup N(L_2)\right)}.
$$    

\end{lem}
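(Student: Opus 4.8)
The plan is to unwind the definition of $M_{L_1^{n_1}\cup L_2^{n_2}}$ and exploit the fact that the piece glued in during round $1$-surgery is an honest product $\mathbb{T}^2\times I$, so that excising a parallel copy of it leaves only collars, which can be absorbed. First I would write
$$M_{L_1^{n_1}\cup L_2^{n_2}} \;=\; N\cup_{\phi} T,$$
where $N=\overline{\s^3\setminus(N(L_1)\cup N(L_2))}$ is the link complement and $T=\s^1\times\D^1\times\s^1\cong\mathbb{T}^2\times[-1,1]$ is the glued thickened torus, attached along \emph{both} ends by $\phi$: the end $\mathbb{T}^2\times\{-1\}$ onto $\partial N(L_1)$ and the end $\mathbb{T}^2\times\{1\}$ onto $\partial N(L_2)$. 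In particular $M_{L_1^{n_1}\cup L_2^{n_2}}$ is closed, with the two gluing tori sitting in its interior.

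Next I would describe the removed thickened torus $R\cong\mathbb{T}^2\times I$ precisely as a parallel copy of the interior torus $\partial N(L_2)=\mathbb{T}^2\times\{1\}$: taking a collar $\partial N(L_2)\times[0,\epsilon]\subset N$ together with a collar $\mathbb{T}^2\times[1-\epsilon,1]\subset T$ and gluing them along $\partial N(L_2)$ produces a bicollar $R$ parallel to $\partial N(L_2)$. Since any two parallel tori are isotopic, the diffeomorphism type of the complement does not depend on the particular parallel thickened torus chosen, so it suffices to treat this one.

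Then comes the core computation. Since $R$ is literally a subset, the complement decomposes as
$$M_{L_1^{n_1}\cup L_2^{n_2}}\setminus R \;=\; \bigl(N\setminus(\partial N(L_2)\times[0,\epsilon])\bigr)\ \cup_{\partial N(L_1)}\ \bigl(\mathbb{T}^2\times[-1,1-\epsilon]\bigr).$$
The first factor is diffeomorphic to $N$, since deleting a boundary collar changes nothing; the second is a thickened torus still joined to it along the single torus $\partial N(L_1)=\mathbb{T}^2\times\{-1\}$, while the $\mathbb{T}^2\times\{1\}$ end has been excised and is now a free boundary. The two pieces remain attached along $\partial N(L_1)$, so the complement stays connected, and gluing a product $\mathbb{T}^2\times I$ onto a boundary torus is nothing more than extending a collar. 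I would finish by invoking the standard fact that $X\cup_{\partial}(\partial\times I)\cong X$ for any boundary component to conclude $\overline{M_{L_1^{n_1}\cup L_2^{n_2}}\setminus R}\cong N=\overline{\s^3\setminus(N(L_1)\cup N(L_2))}$.

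The point requiring care is this last step: one must check that the collar absorption is insensitive to the surgery gluing map $\phi$, which identifies $\mathbb{T}^2\times\{-1\}$ with $\partial N(L_1)$ by a nontrivial diffeomorphism encoding the coefficient $n_1$. Here I would be explicit that the diffeomorphism type of $N$ with a thickened torus attached along one end depends only on the fact that a \emph{product} is attached, not on the attaching map — the gluing extends over the collar — so the round $1$-surgery data is genuinely nullified. A secondary bookkeeping check is that after the excision exactly the two tori parallel to $\partial N(L_1)$ and $\partial N(L_2)$ reappear as the boundary of the complement, matching the two boundary tori of $N$.
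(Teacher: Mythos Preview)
Your argument is correct. Note that the present paper does not prove this lemma at all: it is quoted from \cite{PD-RSD} (as ``Lemma~7'') and used as a black box, so there is no in-paper proof to compare against. Your direct collar-absorption argument is precisely the natural one: write $M_{L_1^{n_1}\cup L_2^{n_2}}=N\cup_\phi(\mathbb{T}^2\times I)$, excise a bicollar of the interior torus $\partial N(L_2)$, and observe that what remains is $N$ with a product $\mathbb{T}^2\times I$ attached along a \emph{single} boundary torus, which is diffeomorphic to $N$ itself. Your remark that the attaching diffeomorphism $\phi|_{\mathbb{T}^2\times\{-1\}}$ is irrelevant here---because any self-diffeomorphism of $\mathbb{T}^2$ extends to $\mathbb{T}^2\times I$ as a product, so the one-ended collar attachment is always trivial---is exactly the point that makes the surgery data ``nullified,'' and it is good that you flagged it explicitly.
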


We fix a convention of performing round 2-surgery on the second component of the round 1-surgery link. 
This leads to an additional coefficient on $L_2$. 
We call this pair of round 1-surgery and round 2-surgery a {\it joint pair}. 
Formally, we define it as follows. 

\begin{defn}
    A round 1-surgery link $L_{1} \cup L_{2}$ is said to be a {\it joint pair of round surgeries of indices 1 and 2} if one of the components of $L_{1} \cup L_{2}$ is treated as a round 2-surgery knot. 
    We denote the coefficient of the round 2-surgery knot on the top of that component next to the round 1-surgery coefficient as shown in Figure \ref{fig: JP}.
\end{defn}
\begin{figure}[ht]
    \centering
    \includegraphics[scale=0.4]{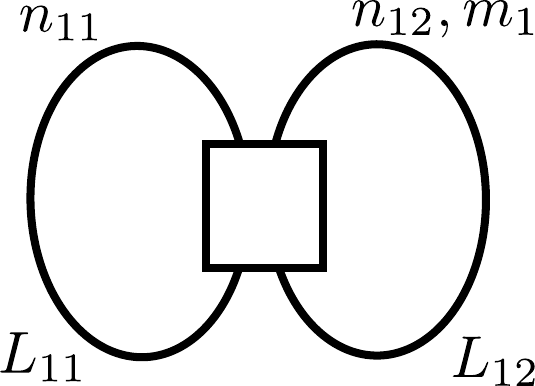}
    \caption{A diagram of a joint pair $L_{11} \cup L_{12}$ with round 1-surgery coefficient $n_{11}$ on $L_{11}$ and $n_{12}$ on $L_{12}$, and round 2-surgery coefficient $m_1$ on $L_{12}$. The box in the middle represents the linking between the components and the knotting of $L_{11}$ and $L_{12}$.}
    \label{fig: JP}
\end{figure}

We have also proved that any connected 3-manifold obtained by a sequence of round surgeries of index 1 and 2 on a link in $\s^3$ must have each round 2-surgery knot in a joint pair with a round 1-surgery link.

\begin{defn}
A round surgery diagram in $\mathbb{S}^3$ of a connected 3-manifold is a link $L=L_{1} \cup \ldots \cup L_{k}$ such that 
    \begin{enumerate}
        \item for each $1\leq i\leq n$, $L_i$ is a link with two components with an integral framing corresponding to the round 1-surgery,
        \item and for some indices $1 \leq i \leq n$, $L_i$ is a joint pair of the round surgeries of indices 1 and 2. 
    \end{enumerate}
\end{defn}


\subsection{Contact $3$-manifolds}
Let $M$ be a smooth $(2n+1)$-manifold. 
\begin{defn}
    A {\it contact structure} $\xi$ on $M$ is a maximally non-integrable hyperplane field, i.e., any locally defined 1-form $\alpha$ such that ker$(\alpha)= \xi$ satisfies $\alpha \wedge (d\alpha)^n \neq 0$ at all points. The pair $(M,\xi)$ is called a contact manifold.
\end{defn}

\begin{exm}
On $\mathbb{S}^3 \subset \mathbb{R}^4$, the standard contact structure $\xi_{st}$ can be defined as the kernel of the contact 1-form $\alpha_{st}=\left( x_1dy_1-y_1dx_1+ x_2dy_2-y_2dx_2\right)$.
The contact 3-manifold $\left(\s^3, \xi_{st}\right)$ is referred to as the {\it standard contact 3-sphere}. 
\end{exm}

Two contact manifolds $(M_i, \xi_i)$, for $i=1,2$, are said to be {\it contactomorphic} if there exists a diffeomorphism $\phi: M_1 \to M_2$ such that $d\phi(\xi_1)= \xi_2$. 
Such a diffeomorphism $\phi$ is called a {\it contactomorphism} between $M_1$ and $M_2$.

A contact structure $\xi$ is said to be {\it cooriented} if the quotient bundle $TM/\xi $ is trivial. 
Moreover, for a cooriented contact structure $\xi$, there exists a global 1-form $\alpha$ such that $\xi= \text{ker}(\alpha)$.
Such a 1-form $\alpha$ is called a {\it contact form}. 
For example, the standard contact structure $\xi_{st}$ on $\s^3$ is cooriented. 
In this article, we assume that $M$ has a cooriented contact structure $\xi$ given by the kernel of a global 1-form $\alpha$ unless specified. 
We denote the cooriented contact manifold by $(M,\alpha)$. 

\subsubsection{Legendrian knots}
An embedded knot $K$ in $M$ is called {\it Legendrian knot} if it is tangent to the contact plane at each point. 
Recall that for a given Legendrian knot $K \subset (M, \xi)$ there is a tubular neighborhood $N(K)\subset M$ of $K$, which is contactomorphic to $(\s^1 \times \R^2, \text{ker}(\cos z \,  dx - \sin z\,  dy))$, where $z \in \s^1 \simeq \R/[0, 2\pi]$ and $(x,y) \in \R^2$. 
Under this contactmorphism, the spine $\s^1 \times \{0\} \subset \s^1 \times \R^2$ maps to $K \subset M$. 
We define a solid torus of radius $\delta$ as follows
\[ N_{\delta}(K)= \{(z, (x,y)) \in \s^1 \times \R^2 | \,  x^2 +y^2 = \delta^2\}. \]
Suppose $K$ is a null-homologous knot in $M$. 
The twisting of contact planes along $K$ with respect to the canonical longitude $\lambda$ defines an invariant of the Legendrian knot $K$, called {\it Thurston--Bennequin invariant}.
Moreover, on $\partial N_{\delta}(K)$, the contact structure also induces a longitude called the {\it contact longitude} $\lambda_c$ such that 
        $$\lambda_c = tb(K) \cdot \mu + \lambda; $$ 
        where $tb(K)$ is the Thurston--Bennequin invariant of $K$.

We are interested in studying contact round surgery on a Legendrian link in $\left(\s^3, \xi_{st}\right)$. 
Since any Legendrian knot in $\left(\s^3, \xi_{st}\right)$ can be realized as a Legendrian knot in $\left(\R^3, \xi_{st}^{\prime}(:=\text{ker}(dz+xdy))\right)$, we project a Legendrian knot onto $(y,z)$-plane in $\R^3$ to draw the contact surgery diagrams.  
These projections of the Legendrian knots are called the {\it front projections}. 

Suppose $\gamma: \s^1 \to \mathbb{R}^3$ is an embedding given by $\gamma(s)= (x(s), y(s), z(s))$ such that $\gamma(\s^1)$ is a Legendrian knot $K$.

\begin{defn}
    The {\it front projection} of a parametrised curve $\gamma(s)$ in $(\mathbb{R}^3, \xi'_{st})$ is the curve $$\gamma_F(s)= (y(s), z(s)).$$ 
\end{defn}

\subsubsection{Tight and Overtwisted contact structures.}
An embedded disk $D$ in a contact 3-manifold $(M,\xi)$ is an {\it overtwisted disk},  if it has a Legendrian boundary whose surface framing coincides with the contact framing, and in the interior, there is a point with coinciding contact plane and tangent plane to the disk. 
A contact structure on a 3-manifold is said to be {\it overtwisted} if it contains an overtwisted disk. 
For an example, the kernel of the 1-form $\A_{ot}= \cos r dz + r\sin r d\phi$ on $\R^3$, formulated in polar coordinates $(0\leq r, 0\leq \phi< 2\pi)$ on the $(x,y)$-plane, defines an overtwisted contact structure $\xi_{ot}$. 
In particular, the disk $D=\{z=0, r \leq \pi\}$ is an overtwisted disk on $(\R^3, \xi_{ot})$.

\subsubsection{Convex surfaces}
\begin{defn}
    A vector field $X$ on $(M,\xi)$ is called a {\it contact vector field} if its flow preserves $\xi$. 
\end{defn}

\begin{exm}
    The radial vector field $X:= x\partial_x + y\partial_y$ is a contact vector field in the open tubular neighborhood $\s^1 \times \mathbb{R}^2$ of a Legendrian knot $K$.
\end{exm}

We recall some terminology from the theory of convex surfaces. 
For the details, the reader may refer to \cite{Book-HG} or \cite{Giroux-Convex91}. 

\begin{defn}
    An embedded closed surface $\Sigma$ in $(M,\xi)$ is called {\it convex surface} if it admits a transverse contact vector field in its neighborhood. 
\end{defn}

The boundary $\partial N_{\delta}(K)$ is transverse to the radial vector field X. Hence, it is a convex surface in $(\s^1 \times \mathbb{R}^2, \text{ker}(\cos z \,dx-\sin z\, dy))$. 

Given a convex surface $\Sigma$, the set $\Gamma_{\Sigma}$ of all points where the contact vector field is tangent to the contact structure is called the dividing set of $\Sigma$. In the above example, $\Gamma_{\partial N_{\delta}(K)}$ is the set of points $w \in \partial N_{\delta}(K)$ such that $X(w) \in \text{ker}(\cos z \,dx-\sin  z\, dy))$. 

The following theorem is an important tool to prove the overtwistedness of a contact structure. 

\begin{thm}[Giroux's criterion, \cite{Giroux-Convex91}]\label{thm: Giroux Criterion}
Let $\Sigma$ be convex surface in $(M, \xi)$. A vertically invariant neighborhood of $\Sigma$ is tight if and only if $\Sigma \neq \s^2$ and $\Gamma_{\Sigma}$ contains no contractible curves or $\Sigma = \s^2$ and $\Gamma_{\Sigma}$ is connected. 
\end{thm}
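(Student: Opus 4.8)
The plan is to prove the two directions separately, after first reducing everything to the dividing set. Since the neighborhood is vertically invariant, I would model it as $\Sigma \times \R$ with the contact structure invariant under translation in the $\R$-factor and with $\Sigma = \Sigma \times \{0\}$ convex; here every slice $\Sigma \times \{t\}$ is again convex with the same dividing set $\Gamma_{\Sigma}$. The key preliminary fact I would invoke is Giroux's flexibility theorem: the germ of $\xi$ along $\Sigma$, hence the contactomorphism type of the invariant neighborhood, depends only on the isotopy class of $\Gamma_{\Sigma}$ and not on the full characteristic foliation. This lets me replace the characteristic foliation by any convenient one adapted to $\Gamma_{\Sigma}$, and it lets me use the twisting formula $t(L) = -\tfrac12 \#(L \cap \Gamma_{\Sigma})$ for the contact framing relative to the surface framing of a Legendrian curve $L$ realized on $\Sigma$ via the Legendrian realization principle (valid for nonisolating $L$); note that when $L$ bounds a disk in $\Sigma$ this $t(L)$ coincides with $tb$ computed relative to that disk.

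For the forward implication I would prove the contrapositive: if $\Sigma \neq \s^2$ and $\Gamma_{\Sigma}$ has a contractible component, or if $\Sigma = \s^2$ and $\Gamma_{\Sigma}$ is disconnected, then the neighborhood is overtwisted. In either case I can select an innermost contractible dividing curve $\gamma_0$ bounding a disk $D \subset \Sigma$ whose interior meets no dividing curve (on $\s^2$ a disconnected $\Gamma_{\Sigma}$ has at least two components, so an innermost one bounds such a disk). Since $\operatorname{int} D$ is disjoint from $\Gamma_{\Sigma}$, the disk $D$ lies entirely in one of the regions $R_{\pm}$. Using Giroux flexibility I would arrange the characteristic foliation so that it has a closed leaf $L \subset \operatorname{int} D$ parallel to $\gamma_0$; then $L$ is Legendrian and bounds a subdisk $D_L \subset D$ whose characteristic foliation contains no dividing curve. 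A Poincar\'e--Hopf / Euler-characteristic count on $D_L$ forces the contact framing of $L$ relative to $D_L$ to be nonnegative, so after a stabilization $L$ bounds an embedded disk whose surface framing equals its contact framing. This is an overtwisted disk in the sense defined above, establishing overtwistedness.

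For the converse --- no contractible dividing curve (resp.\ connected $\Gamma_{\s^2}$) implies tightness --- I expect the real work. One clean route is a filling argument: realize the invariant neighborhood inside a closed contact manifold known to be (Stein or weakly symplectically) fillable, for instance a suitable circle bundle or surface bundle whose dividing data matches $\Gamma_{\Sigma}$; fillable manifolds are tight by the Eliashberg--Gromov theorem, and tightness is inherited by any subdomain, giving tightness of the neighborhood. The $\s^2$ case with connected $\Gamma$ then reduces to the germ of the standard tight $\s^3$ about the boundary of a Darboux ball. An alternative, more self-contained route is to argue by contradiction: given an overtwisted disk, isotope it into Giroux normal form so that its Legendrian boundary lies on a slice $\Sigma \times \{t\}$; the twisting formula forces $t(\dd D) = -\tfrac12 \#(\dd D \cap \Gamma_{\Sigma}) \geq 0$, hence $\dd D$ is disjoint from $\Gamma_{\Sigma}$, and one must show that such a boundary curve is necessarily contractible on $\Sigma$, contradicting the hypothesis.

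The main obstacle is exactly this converse direction. The forward construction of an overtwisted disk is essentially combinatorial once Giroux flexibility and the twisting formula are in hand, but proving tightness carries the genuine content of the theorem. In the filling approach the difficulty is producing an explicit fillable ambient manifold into which the germ embeds while preserving $\Gamma_{\Sigma}$; in the contradiction approach the delicate point is the last step --- ruling out an overtwisted disk whose boundary is disjoint from $\Gamma_{\Sigma}$ yet essential on $\Sigma$ --- which is where the nonisolating and contractibility bookkeeping must be handled with care. Since this statement is classical and cited to \cite{Giroux-Convex91}, I would ultimately defer the hard direction to that reference and present in detail only the reduction to $\Gamma_{\Sigma}$ and the overtwisted-disk construction.
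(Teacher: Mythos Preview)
The paper does not prove this theorem at all: it is stated as Theorem~\ref{thm: Giroux Criterion} with attribution to \cite{Giroux-Convex91} and then simply used as a black box (specifically, to certify overtwistedness in the example following Figure~\ref{fig: gluingAnnuliToOD}). There is therefore no ``paper's own proof'' against which to compare your proposal.

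That said, your sketch is broadly the standard argument, with two small wrinkles worth noting. First, in the forward direction, once you have arranged a closed leaf $L$ of the characteristic foliation inside the innermost disk, the contact framing of $L$ already \emph{equals} the surface framing (the contact planes are tangent to $\Sigma$ along a closed leaf), so $tb(L)=0$ on the nose; the Poincar\'e--Hopf detour and the phrase ``after a stabilization'' are both unnecessary, and stabilizing would actually lower $tb$ below zero. Second, in your contradiction approach to the converse, the step ``isotope the overtwisted disk so that its Legendrian boundary lies on a slice $\Sigma\times\{t\}$'' is exactly the hard part and is not a routine normalization; this is where the real content of Giroux's original argument (or of the filling approach you outline) lives, and you are right to flag it and defer to the reference.
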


A convex torus $T$ is said to be in {\it standard form} if, under some identification of $T$ with $\mathbb{R}^2/\mathbb{Z}^2$, 
\begin{enumerate}
    \item the dividing curves $\Gamma_T$ consist of $2n$ parallel homotopically essential curves of slope 0, 
    \item and the Legendrian rulings with coordinates $(x,y)$ are given by $y=rx+b$, where $r\neq 0$ is fixed, and $b$ varies in a family, with tangencies $y=\frac{k}{2n}$, $k= 1, \ldots, 2n$.  
\end{enumerate}

Notice that in round 1-surgeries on framed links, we glue a thickened torus along two tori. 
In order to extend the contact structure from the complement of the link to the resultant manifold, we need to know all the possible tight contact structures on the thickened torus.      
Therefore, we recall Honda's classification theorem for tight contact structures on $\mathbb{T}^2\times I$ (cf. \cite{KH}).
For the statement of the theorem, we need the notion of twisting $\phi_I$ in the $I$-direction, minimal twisting in the $I$-direction, and nonrotativity in the $I$-direction. For that, the reader may refer to Section 2 of \cite{KH}. 

Recall that the set of dividing curves of a given convex torus $T$ in $\mathbb{T}^2 \times I$ is, up to isotopy, determined by the number $\# \Gamma_T$ of these dividing curves and their slope $s(T)$, defined by the property that each curve is isotopic to a linear curve of slope $s(T)$ in $T \simeq \mathbb{R}^2 / \mathbb{Z}^2$. This information about the dividing curves on the boundary torus is called the boundary data. 

We may normalize the boundary slopes by changing the coordinate system and assume dividing curves with slope $-\frac{p}{q}$, where $p \geq q>0,(p, q)=1$, and $T_0$ has slope $-1$. We denote $T_a = \mathbb{T}^2\times \{a\} $. For this boundary data, we have the following.

\begin{thm}[Classification of tight contact structuress on $\mathbb{T}^2 \times I$, \cite{KH}]\label{thm: ClassOfTgtCSonT2xI}
Consider $\mathbb{T}^2 \times I$ with convex boundary, and assume, after normalizing via $S L(2, \mathbb{Z})$, that $\Gamma_{T_1}$ has slope $-\frac{p}{q}$, and $\Gamma_{T_0}$ has slope $-1$ . Assume we fix a characteristic foliation on $T_0$ and $T_1$ with these dividing curves. Then, up to an isotopy which fixes the boundary, we have the following classification:
\begin{enumerate}
    \item Assume either (a) $-\frac{p}{q}<-1$ or (b) $-\frac{p}{q}=-1$ and $\phi_I>0$. Then there exists a unique factorization $\mathbb{T}^2 \times I=\left(\mathbb{T}^2 \times\left[0, \frac{1}{3}\right]\right) \cup\left(\mathbb{T}^2 \times\left[\frac{1}{3}, \frac{2}{3}\right]\right) \cup$ $(\mathbb{T}^2 \times\left[\frac{2}{3}, 1\right])$, where (1) $T_{\frac{i}{3}}, i=0,1,2,3$, are convex, (2) $\left(\mathbb{T}^2 \times\left[0, \frac{1}{3}\right]\right)$ and $(\mathbb{T}^2 \times\left[\frac{2}{3}, 1\right])$ are nonrotative, (3) $\# \Gamma_{T_{\frac{1}{3}}}=\# \Gamma_{T_{\frac{2}{3}}}=2$, and (4) $T_{\frac{1}{3}}$ and $T_{\frac{2}{3}}$ have fixed characteristic foliations which are adapted to $\Gamma_{T_{\frac{1}{3}}}$ and $\Gamma_{T_{\frac{2}{3}}}$.
    \item  Assume $-\frac{p}{q}<-1$ and $\# \Gamma_{T_0}=\# \Gamma_{T_1}=2$.
    \begin{enumerate}
    \item  There exist exactly $\left|\left(r_0+1\right)\left(r_1+1\right) \cdots\left(r_{k-1}+1\right)\left(r_k\right)\right|$ tight contact structures with $\phi_I=0$. Here, $r_0, \ldots, r_k$ are the coefficients of the continued fraction expansion of $-\frac{p}{q}$, and $-\frac{p}{q}<-1$.
        \item There exist exactly 2 tight contact structures with $\phi_I=n$, for each $n \in \mathbb{Z}^{+}$.
    \end{enumerate}

\item Assume $-\frac{p}{q}=-1$ and $\# \Gamma_{T_0}=\# \Gamma_{T_1}=2$. Then there exist exactly 2 tight contact structures with $\phi_I=n$, for each $n \in \mathbb{Z}^{+}$.
    \item Assume $-\frac{p}{q}=-1$ and $\# \Gamma_{T_0}=2 n_0, \# \Gamma_{T_1}=2 n_1$. Then the non-rotative tight contact structures are in 1-1 correspondence with $\mathcal{G}$, the set of all possible (isotopy classes of) configurations of arcs on an annulus $A= \s^1 \times I$ with markings $\sigma_i \subset \s^1 \times\{i\}, i=0,1$, which satisfy the following:
    \begin{enumerate}
        \item  $\left|\sigma_i\right|=2 n_i, i=0,1$, where $|\cdot|$ denotes cardinality.
        \item  Every point of $\sigma_0 \cup \sigma_1$ is precisely one endpoint of one arc.
        \item There exist at least two arcs which begin on $\sigma_0$ and end on $\sigma_1$.
        \item There are no closed curves.
    \end{enumerate}
\end{enumerate}
\end{thm}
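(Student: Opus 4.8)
The plan is to prove the classification entirely within convex surface theory, by decomposing $\mathbb{T}^2\times I$ into elementary pieces and controlling how they may be stacked. First I would make both boundary tori convex and, after the normalization via $SL(2,\mathbb{Z})$ already assumed in the statement, put them in standard form so that $\Gamma_{T_0}$ and $\Gamma_{T_1}$ realize the prescribed slopes and dividing-curve counts; the Legendrian rulings then produce convex tori $T_s$ of any intermediate slope $s$ in the interior. Using Giroux's criterion (Theorem~\ref{thm: Giroux Criterion}) to detect overtwistedness, I would peel off the outer rotative and nonrotative collars to reduce cases (1)–(3) to the situation $\#\Gamma_{T_0}=\#\Gamma_{T_1}=2$; the remaining genuinely nonrotative configurations with extra dividing curves are exactly what case (4) records.

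The heart of the argument is the analysis of a \emph{basic slice}: a copy of $\mathbb{T}^2\times I$ with two dividing curves on each boundary whose slopes are joined by an edge of the Farey graph, i.e. their determinant is $\pm 1$ in suitable coordinates. I would show by a direct convex-surface computation — spanning the slice by a convex annulus between Legendrian rulings and reading off its dividing set — that a basic slice carries \emph{exactly two} tight contact structures, distinguished by the sign of the relative Euler class (equivalently, by the sign of the single bypass that realizes the slope change). This is the local model from which everything else is assembled.

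Next I would choose a minimal-length edge-path in the Farey graph from $s(T_1)=-\tfrac{p}{q}$ to $s(T_0)=-1$ and use the state-traversal technique to factor any tight structure on $\mathbb{T}^2\times I$ as a stack of basic slices realizing the consecutive edges of this path, its length $N$ being read off from the continued fraction expansion $-\tfrac{p}{q}=[r_0,\ldots,r_k]$. Each slice carries an independent sign, giving at most $2^N$ candidates. The main obstacle, and the step I expect to be hardest, is the \emph{shuffling lemma}: when two adjacent basic slices pivot about a common vertex of the Farey graph, an isotopy interchanges their signs, so along each maximal run of edges pivoting about a fixed vertex only the multiset of signs matters, not their order. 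Carefully collapsing $2^N$ by this relation produces the factor $(r_i+1)$ from each interior run and $r_k$ from the last, giving the count $|(r_0+1)\cdots(r_{k-1}+1)r_k|$; I would then prove the surviving structures pairwise non-isotopic by computing their relative Euler classes, which supplies the matching lower bound.

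Finally I would treat the twisting and multi-curve cases. When $-\tfrac{p}{q}=-1$ the minimal-twisting ($\phi_I=0$) structures are rigid, and inserting a rotative layer that turns the contact planes by an extra amount produces, for each $n\in\mathbb{Z}^{+}$, exactly two tight structures (the two signs of the resulting slice), yielding cases (2b) and (3). For case (4), where the pieces are nonrotative but $\#\Gamma_{T_0}=2n_0$ and $\#\Gamma_{T_1}=2n_1$, I would span $\mathbb{T}^2\times I$ by a convex annulus $A=\s^1\times I$ meeting each boundary in the $2n_i$ marked points and show that the isotopy class of the tight structure is completely recorded by the configuration of dividing arcs of $A$; the four listed conditions — correct cardinalities, each marked point being a single arc endpoint, at least two arcs running from $\sigma_0$ to $\sigma_1$, and no closed curves — are precisely the constraints making the arc system both realizable and tight (the last two encoding Giroux's criterion), which yields the stated bijection with $\mathcal{G}$.
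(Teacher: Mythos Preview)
The paper does not prove this theorem: it is quoted verbatim from Honda \cite{KH} in the preliminaries and used as a black box, so there is no proof in the paper to compare against. Your sketch is essentially Honda's own argument (basic slices, Farey-graph factorization, the shuffling relation, relative Euler classes for the lower bound, and the annulus dividing-set bijection for the nonrotative case), and is a faithful outline of that proof.
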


The classification of non-rotative tight contact structures essentially uses the following proposition from \cite{KH}.
In Section \ref{sec: ContSurDia}, we mention the holonomy map defined in the following lemma. 

\begin{lem}[Proposition 4.9 in \cite{KH}]\label{lem: NonRotTightContStr}
    Let $\Gamma_{T_i}, i=0,1$, satisfy $\# \Gamma_{T_i}=2$ and $s_0=s_1=-1$. 
    Then there exists a holonomy map $k: \pi_0\left(\right.$Tight$^{\min}\left(\mathbb{T}^2 \times I, \Gamma_{T_1} \cup\right.$ $\left.\left.\Gamma_{T_2}\right)\right) \rightarrow \mathbb{Z}$ which is bijective.
    Here, $Tight^{\min }\left(\mathbb{T}^2 \times I, \Gamma_{T_1} \cup \right.\left.\Gamma_{T_2}\right)$ denotes the set of all tight contact structures on $\mathbb{T}^2 \times I$ with minimal twisting in the $I$-direction and dividing curves $\Gamma_{T_1} \cup \Gamma_{T_2}$ on the boundary tori $T_1 \cup T_2$. 
\end{lem}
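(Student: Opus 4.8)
The plan is to follow the convex-surface strategy underlying Honda's argument: first define the holonomy map $k$ explicitly in terms of a convex vertical annulus, and then show separately that it is injective and surjective. \emph{Setup.} After an $SL(2,\mathbb{Z})$ normalization we may assume the two boundary tori $T_0,T_1$ are convex, in standard form, each carrying exactly two dividing curves of slope $-1$ together with a fixed Legendrian ruling of some slope $r\neq -1$. Minimal twisting in the $I$-direction means that no interior convex torus realizes a slope other than $-1$, so the dividing slope is constant across the interval; the only remaining freedom is a transverse \emph{winding} of the dividing curves, which the holonomy is designed to detect.

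\emph{Definition of $k$.} I would take a vertical annulus $A\cong \s^1\times I$ whose two boundary circles are Legendrian ruling curves $\gamma_0\subset T_0$ and $\gamma_1\subset T_1$, and perturb it to be convex with Legendrian boundary, so that its dividing set $\Gamma_A$ is a properly embedded $1$-manifold. A dividing circle bounding a disk in $A$ would violate Giroux's criterion (Theorem \ref{thm: Giroux Criterion}), while a dividing circle parallel to the core of $A$ would force extra twisting and is excluded by the minimal-twisting hypothesis; hence $\Gamma_A$ consists only of arcs. After removing the boundary-parallel arcs by the usual bypass and imbalance reduction, the surviving arcs run from $\gamma_0$ to $\gamma_1$, and the net signed offset with which they connect the marked points $\gamma_i\cap\Gamma_{T_i}$ defines an integer $k(\xi)$. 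Giroux flexibility, together with the fact that any two such annuli are ambient isotopic, shows that $k$ is independent of the choices and descends to a map on $\pi_0(\mathrm{Tight}^{\min})$.

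\emph{Bijectivity.} For injectivity, suppose $k(\xi)=k(\xi')$ and arrange both structures to realize the same convex annulus $A$ with identical $\Gamma_A$. Cutting $\mathbb{T}^2\times I$ along $A$ and rounding edges produces a solid torus whose convex boundary carries a determined dividing set; by the uniqueness of tight contact structures on a solid torus with fixed (suitably standard) boundary data, $\xi$ and $\xi'$ agree on this piece up to isotopy rel boundary, and regluing gives $\xi\simeq\xi'$. For surjectivity, I would start from a vertically invariant model of holonomy $0$ and apply the $n$-th power of the Dehn twist supported in a neighborhood of an interior convex torus along its dividing direction; this diffeomorphism fixes the boundary, preserves tightness and minimal twisting, and shifts the holonomy by $n$, so every integer is realized. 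Hence $k$ is a bijection.

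\emph{Main obstacle.} The crux is injectivity: converting the equality of the combinatorial invariant $k$ into a genuine contact isotopy rel boundary. This requires the full convex toolkit---bypass attachments and the imbalance principle to normalize $\Gamma_A$, edge-rounding to identify the cut-open manifold with a standard solid torus, and the uniqueness theorem for tight structures on that piece. A secondary subtlety is the well-definedness of $k$: one must verify that minimal twisting genuinely rules out core-parallel dividing circles on $A$ (which would otherwise record spurious twisting) and that the bypass reduction of boundary-parallel arcs leaves the net offset unchanged.
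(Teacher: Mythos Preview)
The paper does not give its own proof of this lemma: it is stated in the preliminaries as Proposition~4.9 of \cite{KH} and used as a black box (in Lemma~\ref{lem: ResOfStdTightCont} and in the overtwistedness discussion after Theorem~\ref{thm: JCR1StoCR1S}). There is therefore nothing to compare your argument against in this paper.

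That said, your sketch is a faithful outline of Honda's original argument: define the holonomy via the dividing set on a convex vertical annulus with Legendrian ruling boundary, use minimal twisting to exclude core-parallel dividing circles, reduce boundary-parallel arcs, and read off the integer offset; for injectivity, cut along the annulus to a solid torus and invoke the classification there; for surjectivity, twist an $I$-invariant model. The main technical points you flag (well-definedness under change of annulus, and that edge-rounding really yields a solid torus with standard boundary data) are exactly the ones Honda handles, so your identification of the crux is accurate. If you want this to stand as a self-contained proof rather than a roadmap, you would need to actually carry out the edge-rounding computation and verify that the resulting boundary slope on the solid torus falls into the uniqueness range of Honda's solid-torus classification; as written, that step is asserted rather than checked.
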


We will use the contact Kirby move of type 1 (cf. \cite{PD-CFKM}) to define the contact joint pairs. Contact joint pair of contact round surgeries of indices 1 and 2 will be used to establish a correspondence between contact $(\pm1)$-surgery and round surgery diagrams that consist of only contact joint pairs. 

\subsection{Contact Kirby move of type 1}\label{subsec: Pre-CFKM}
Contact Dehn surgery is an operation on the contact 3-manifold to obtain a new contact 3-manifold.  
Intuitively, one wants to perform a Dehn surgery on a Legendrian knot with respect to the contact longitude $\lambda_c$.
It means that the surgery coefficient $r=\frac{p}{q}$ is given with respect to the contact longitude. This surgery coefficient is called the \emph{contact surgery coefficient}. 

For surgery coefficient $1/q$ and $q\in \mathbb{Z}$, the contact structure on the resultant contact 3-manifold is completely determined by the Legendrian knot and the coefficient. 
However, for other coefficients, the contact structure on the resultant contact 3-manifold also depends on the choice of a tight contact structure on the glued solid torus. 
In the case of integer coefficients, there are at most two choices for contact structures on the glued solid torus. Due to Theorem~\ref{thm: LegPresnthm}, each choice of the contact structure corresponds to a unique contact $(\pm1)$-surgery presentation.

We consider a contact $(m+1)$-surgery diagram on a Legendrian unknot $K$ with $tb(K) = -m$, where all stabilizations are performed on the right, as depicted on the left in Figure~\ref{fig:NewDiagram}. 
This surgery diagram corresponds to at most two contact structures on the resulting 3--manifold, each of which admits a description via a contact $(\pm 1)$-surgery presentation. 
By the classification theorem for integral cosmetic contact surgeries (cf.~Lemma~5.4.4 in~\cite{MK-thesis} and Corollary~3.5 in~\cite{EKO}), the contact $(\pm 1)$-surgery on the Legendrian link shown on the right in Figure~\ref{fig:NewDiagram} represents $(S^3, \xi_{st})$. 
Moreover, every surgery diagram of the standard contact structure $\xi_{st}$ on $S^3$ along a Legendrian unknot in $(S^3, \xi_{st})$ arises in this manner.

\begin{figure}[ht]
    \centering
    \includegraphics[scale=0.5]{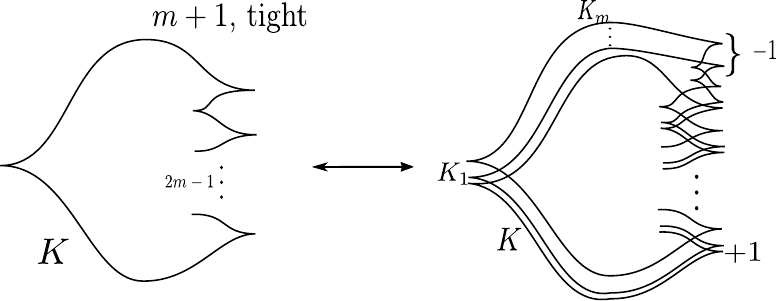}
    \caption{New contact surgery diagram that shows corresponding contact $(\pm1)$-surgery presentation produce $(\mathbb{S}^3, \xi_{st})$.}
    \label{fig:NewDiagram}
\end{figure}

To specify $(\pm1)$-surgery presentation by means of a surgery diagram on a Legendrian unknot, 
we label the Legendrian unknot $K$ with the word \textit{tight} above it, as illustrated in Figure~\ref{fig:NewDiagram}. 
We denote by $\mathcal{C}$ the collection of all integral contact cosmetic surgeries on a Legendrian unknot whose $(\pm1)$-surgery presentation corresponds to $\left(S^3, \xi_{st}\right)$.

A potential contact analogue of the Kirby move of type~1 is the \textit{addition} (or \textit{deletion}) of a contact surgery diagram $K$ to (or from) a given integral contact surgery diagram, where $K \in \mathcal{C}$.

In Figure~\ref{fig:NewDiagram}, we depict a presentation of $(\pm1)$-surgery  consisting of a contact $(+1)$-surgery on one component $K$, which is identical to the Legendrian unknot $K$ shown on the left, together with contact $(-1)$-surgeries on $m$ components $K_1, \ldots, K_m$. 
Each $K_i$ is obtained from $K$ by performing a single stabilization on the right.

\section{Contact Round Surgeries and Their Diagrams}\label{sec: ContSurDia}
In this section, we define \emph{contact round $1$-surgery} on a framed Legendrian link within a contact $3$-manifold and \emph{contact round $2$-surgery} on a Legendrian knot in the standard contact $3$-sphere, ensuring that the resulting $3$-manifold inherits a contact structure. 
Furthermore, we show that \emph{Adachi’s Legendrian round surgery} can be viewed as a contact round $1$-surgery on a Legendrian link, and that \emph{Adachi’s contact round $2$-surgery} on a convex torus in $(S^3, \xi_{st})$ corresponds to performing contact round $2$-surgery on a framed Legendrian knot in the standard contact $3$-sphere.

\subsection{Contact round 1-surgery}\label{subsec: CR1S}
Suppose $L_1 \cup L_2$ is a Legendrian link in a contact 3-manifold $(M, \xi)$. 
We wish to define a contact round 1-surgery on this link. 
Suppose $n_1$ and $n_2$ are the round 1-surgery coefficients on $L_1$ and $L_2$ with respect to the contact longitudes $\lambda_{c_1}$ and $\lambda_{c_2}$ respectively. 
In round 1-surgery, we first remove the interiors of the standard neighborhoods $N_{\delta}(L_1)\cup N_{\delta}(L_2)$ from $(M, \xi)$.
We obtain $N:= M \setminus \{ int(N_{\delta}(L_1))\cup int(N_{\delta}(L_2))\} $. 
Clearly, $\dd N = \dd N_{\delta}(L_1) \cup \dd N_{\delta}(L_2)$ and each boundary torus $\dd N_{\delta}(L_i)$ is a convex torus with two dividing curves isotopic to the contact longitude.

Now, we glue a thickened torus $\mathbb{T}^2 \times [1,2]$ to $N$ by the gluing map $\phi: \dd(\mathbb{T}^2\times [1,2]) \to \dd N$.
By $x$ and $y$ we denote representative curves of the generating classes of the first homology of the thickened torus. Our choice of representative curves, within the generating class, is unique up to isotopy of simple closed curves in $\mathbb{T}^2$.
We identify $\mathbb{T}^2 \times \{t\}$ with $(\mathbb{R}^2/\mathbb{Z}^2)\times \{t\}$ such that $x \mapsto (1,0)$ and $y\mapsto (0,1)$, for $t\in [1,2]$.
Suppose $\mathbb{T}^2 \times [1,2]$ has a tight contact structure such that the boundary tori are convex with two dividing curves of slopes $\frac{-1}{n_j}$ on the boundary torus $\mathbb{T}^2 \times \{j\}$. 

By the description of the (topological) round 1-surgery on a framed link, the gluing map $\phi$ maps the curve $x$ to $\mu_j$ and $y$ to $n_j\cdot \mu_j+ \lambda_{c_j}$ on $\mathbb{T}^2 \times \{j\}$, for each $j=1,2$.

In particular, the dividing curves map to the dividing curves under the gluing map $\phi$. 
Since the germ of the contact structure near a convex surface is completely 
determined in terms of the isotopy class of dividing curves, we can extend the contact structure $\xi$ to a contact structure $\zeta$ on the resulting 3-manifold $M$ obtained after the round 1-surgery on $L$.
By Theorem \ref{thm: ClassOfTgtCSonT2xI}, $\mathbb{T}^2 \times [1,2]$ has many tight contact structures satisfying given boundary conditions.
Therefore, the contact structure $\zeta$ on $M$ depends not only on the Legendrian link $L$ and round 1-surgery coefficients but also on the choice of the tight contact structure on $\mathbb{T}^2 \times [1,2]$. 

We say contact 3-manifold $(M, \zeta)$ is obtained by performing contact round 1-surgery on $\left(L_1 \cup L_2\right) \subset (M, \xi)$ with round 1-surgery coefficients $n_1$ on $L_1$ and $n_2$ on $L_2$. 
Moreover, the contact structure $\zeta$ depends on the framed link and choice of the tight contact structure on $\mathbb{T}^2 \times [1,2]$.

When $M$ is the standard contact $3$-sphere, we can represent the Legendrian link in $(\mathbb{R}^3, \xi_{st}^{\prime})$ by its front projection to illustrate the diagram corresponding to contact round $1$-surgery.

While drawing the diagram, we need to mention the chosen tight contact structure on the glued thickened torus to completely describe the resultant contact 3-manifold.
For example, see Figure \ref{fig: ExampleContR1Surgery}, we obtain a circle bundle over a torus with a tight contact structure on it by performing contact round 1-surgery on the Legendrian Hopf link with contact round 1-surgery coefficient $0$ on both components and gluing an $I$-invariant neighborhood of the standard convex torus. 

\begin{figure}[ht]
    \centering
    \includegraphics[scale=0.3]{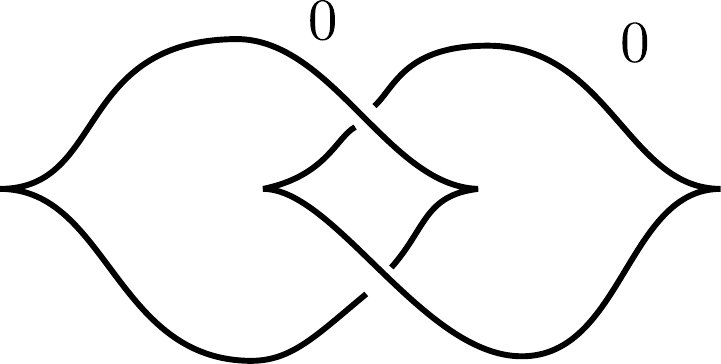}
    \caption{Contact round 1-surgery presentation with gluing $I$-invariant tubular neighborhood of the standard convex torus corresponds to a circle bundle over a torus with a tight contact structure.}
    \label{fig: ExampleContR1Surgery}
\end{figure}

The contact round 1-surgery coefficient $0$ equals the round 1-surgery coefficient $-1$ on both components of the Hopf link. 
By Example~\ref{exm: R1SD}, the round 1-surgery diagram on the Hopf link with the $-1$ coefficients on both components produces a circle bundle over a torus. 
Moreover, this contact round 1-surgery is equivalent to Adachi's Legendrian round surgery by Theorem~\ref{thm: JCR1StoCR1S}. 
In particular, the resultant contact structure on the circle bundle over a torus is symplectically fillable; hence, the resultant contact structure on the circle bundle over a torus is tight.

\subsection{Contact round 2-surgery}\label{subsec: CR2S}
In round 2-surgery, we remove a thickened torus and glue two solid tori. 
From Theorem~\ref{Thm: RSD}, we know that a round 2-surgery on a thickened torus in $\s^3$ is determined by a knot $K$ with round 2-surgery coefficient $\frac{p}{q}$. 
In particular, the thickened torus $\mathbb{T}^2 \times [1,2]$ embeds as $ N_{\delta_2}(K)\setminus int(N_{\delta_1}(K))$. 
Suppose $K$ is a Legendrian knot in $\s^3$ with a round 2-surgery coefficient $\frac{p}{q}$ with respect to the contact longitude $\lambda_c$. 
After removing this thickened torus, we obtain $N= \overline{\s^3 \setminus \{N_{\delta_2}(K)\setminus int(N_{\delta_1}(K))}\}$$= (\s^3 \setminus int(N_{\delta_2}(K))) \sqcup N_{\delta_1}(K)$, i.e. a disjoint union of the knot exterior and the tubular neighborhood of $K$. 
Notice that glueing a solid torus to the knot exterior, producing a 3-manifold that admits a contact structure, is the same as performing contact Dehn surgery on $K$ with contact surgery coefficient $\frac{p}{q}$.
Moreover, the gluing of a solid torus to the tubular neighborhood $N_{\delta_1}(K)$ of $K$ produces a lens space $L(a,b)$ for some $a, b \in \mathbb{Z}$.
Each boundary torus of the boundary $\dd N = \dd N_{\delta_2}(K) \cup \dd N_{\delta_1}(K)$ is convex with two dividing curves parallel to the contact longitude.
Now, we glue one solid torus along each boundary component, two in total.
We denote a solid torus by $T_j$ if it glues to $\dd N_{\delta_j}(K)$ for $j=1,2$. 
On $\dd T_j$, suppose $m_j$ and $l_j$ denote the meridian and a longitude respectively.  
Suppose $p\neq 0$. Then, the preimage of the dividing curve (isotopic to $\lambda_c$) has a non-zero slope. 
Thus, we can choose $T_j$ with a tight contact structure by classification of tight contact structures of $\s^1 \times \D^2$ in \cite{KH} satisfying the slope condition and extend $\xi_{st}$ to a contact structure $\zeta$ on the resultant 3-manifold $M$.

If $p= 0$ (and $q=\pm 1$), then the boundary slope of $\dd N_{\delta}(K)$ is zero. 
We glue in a solid torus with a contact structure defined as follows. 

Let $\zeta_0$ be the contact structure on $\s^1\times \D^2$ defined by the smooth 1-form $\beta_0= h_1(r)d\theta+ h_2(r) d\phi$, where  $\theta$ is $\s^1-$coordinate and $0\leq r, \,  \phi\in [0,2\pi]$ are the polar coordinates on $\D^2$. 
Here we impose the following general conditions on the smooth plane curve $r \to \gamma(r)= (h_1(\theta), h_2(\theta))$: 
$h_1(r)= -1$ and $h_2(r)= -r^2$, for near $r=0 $, and the position vectors $\gamma(r) \text{ and } \gamma'(r)$ are not parallel to each other. 
For the $0$-surgery,  we take $h_1(1)=1 $ and $h_2(1)= 0$. 
We may perturb this torus into a convex surface with $\# \Gamma_{T^2}=2$ and slope $0$.

The solid torus $(T, \z_0)$ is glued to $M \setminus N_{\delta}(K)$ using the attaching map corresponding to $p=0$.

We say contact 3-manifold $(M, \zeta) \sqcup (L(a, b), \chi)$ is obtained by performing contact round 2-surgery on $K \subset \left(\s^3, \xi_{st}\right)$ with round 2-surgery coefficient $\frac{p}{q}$ on $K$. 
Moreover, the contact structure $\zeta$ and $\chi$ depends on $K$, coefficient $\frac{p}{q}$ and choice of the tight contact structure on $\mathbb{S}^1 \times \D^2$.

\begin{figure}[ht]
    \centering
    \includegraphics[scale=0.5]{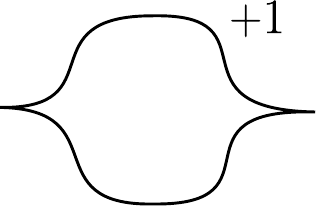}
    \caption{Contact round 2-surgery presentation of $(\s^1 \times \s^2, \xi_{tight}) \sqcup \left(\s^3, \xi_{st}\right)$.}
    \label{fig: ExamContR2Surgery}
\end{figure}

For example, we can present $(\s^1 \times \s^2, \xi_{tight}) \sqcup \left(\s^3, \xi_{st}\right)$ via a single Legendrian unknot with contact round 2-surgery coefficient $+1$ (see Figure \ref{fig: ExamContR2Surgery}). 
We observe that this contact round 2-surgery can be realized as a pair of contact Dehn surgeries on two $\left(\s^3, \xi_{st}\right)$, namely, the contact $(+1)$-surgery on $\left(\s^3, \xi_{st}\right)$ and contact trivial surgery on $\left(\s^3, \xi_{st}\right)$. 
The first contact Dehn surgery yields $(\s^1 \times \s^2, \xi_{tight})$ and other one yields $\left(\s^3, \xi_{st}\right)$.
It implies there is a unique tight contact structure on the glued solid tori while performing round 2-surgery. 
Thus, we do not need to explicitly mention the contact structure on the glued solid torus while drawing the diagram.

\begin{rem}
It is easy to see that a (topological) round 2-surgery is complementary to the round 1-surgery, i.e., there exists a round surgery of index 1 cancelling the effect of a round surgery of index 2 and vice versa. 
For example, we can perform a topological round 1-surgery along $\s^1 \times \{p\} \subset \s^1 \times \s^2$ and an unknot $U \subset \s^3$, to recover $\s^3$ again.
Thus, it is natural to ask whether the contact round surgery of index 2 is complementary to the contact round surgery of index 1. It turns out that it is not always the case that the contact round surgery of indices 1 and 2 are complementary operations. To this end, we consider the following argument. 
We perform a contact round 2-surgery on $\left(\mathbb{T}^2 \times [-1,1], \zeta\right) \subset (M, \xi)$ and obtain 
$$ (M', \xi')= \left\{(M, \xi) \setminus \left(\mathbb{T}^2 \times [-1,1], \zeta\right)\right\} \cup  \left(T_j, \xi_j\right). $$
Here, $\left(T_j, \xi_j\right)$ denotes a solid torus with contact structure $\xi_j$ that glues along the boundary torus $\mathbb{T}^2 \times \{j\} \subset \mathbb{T}^2 \times [-1,1]$. 
Now, if the solid torus $\left(T_{j}, \xi_{j}\right)$ can be identified as a standard tubular neighborhood of a Legendrian knot $L_j$ in $\left(M', \xi'\right)$, then we can perform a contact round 1-surgery on $\left(M', 
\xi'\right)$ along the solid torus $\left(T_{j}, \xi_{j}\right)$ and glue $\left(\mathbb{T}^2 \times [-1,1], \zeta\right)$ to recover $(M, \xi)$ again. Otherwise, it is not possible to perform contact round surgery of index $1$ to undo the effect of contact round $2$-surgery.
One more difficulty lies in the fact that it is not immediate to find such embedded solid tori to perform a complementary contact round 1-surgery on a contact 3-manifold obtained by a contact round surgery of index 2.
The same difficulty lies in the case of performing a complementary contact round 2-surgery on a contact 3-manifold obtained by a contact round surgery of index 1.   
\end{rem}


\subsection{Realisation of Adachi's Legendrian round surgery on $\left(\s^3, \xi_{st}\right)$ as a special class of contact round 1-surgery}
In \cite{Jiro}, Adachi introduced Legendrian round surgery on $(M, \xi)$ as the attachment of the symplectic round handle to the convex end of the symplectization $(M\times [0,1], d(e^t\alpha))$, where $\xi=\text{ker}(\alpha)$. 
In particular, Legendrian round surgery is a round version of the Weinstein surgery on a contact 3-manifold. 
Adachi defined the Legendrian round surgery as follows. 
\begin{enumerate}
    \item Take the standard tubular neighborhood $N(L_i)$ of each Legendrian knot $L_i$, $i=1,2$, so that $N(L_1) \cap N(L_2)= \emptyset$. 
    Then remove the interiors $int(N(L_i))\subset(M, \xi_i)$, i=1,2. 
    \item Reglue the invariant tubular neighborhood $\mathbb{T}^2 \times [-\epsilon, \epsilon]$ of the standard convex torus with a fixed meridian so that the meridian and the dividing curves of $\dd N(L_1)$, $\dd N(L_2)$ and $\mathbb{T}^2 \times \{\pm \epsilon\}$ agree respectively. 
\end{enumerate}

\begin{thm}\label{thm: JCR1StoCR1S}
Legendrian round surgery on a Legendrian link with two components $L= L_1\cup L_2$ in $(M, \xi)$ is equivalent to a contact round 1-surgery on $L$ in $(M, \xi)$ with the identical contact round 1-surgery coefficients on each component and $\mathbb{T}^2 \times [1,2]$ with invariant contact structure on it.  
\end{thm}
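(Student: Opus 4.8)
The plan is to place the two constructions side by side and check that they perform literally the same cut-and-paste. Both Adachi's Legendrian round surgery and the contact round $1$-surgery of Subsection~\ref{subsec: CR1S} begin by deleting the interiors of two disjoint standard neighborhoods of $L_1$ and $L_2$, leaving the common piece $N = M \setminus \big(\mathrm{int}\,N_{\delta}(L_1) \cup \mathrm{int}\,N_{\delta}(L_2)\big)$, and then glue an $I$-invariant thickened torus back along $\partial N$. It therefore suffices to establish two points: that the hypotheses ``identical coefficients'' and ``invariant contact structure'' single out exactly the piece $\mathbb{T}^2 \times [-\epsilon,\epsilon]$ that Adachi reglues; and that the two gluing maps agree on the meridians and the dividing curves, so that the extended contact structures coincide.

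First I would unpack the invariance hypothesis. An $I$-invariant contact structure on $\mathbb{T}^2 \times [1,2]$ admits a contact vector field transverse to every slice $\mathbb{T}^2 \times \{t\}$, so all slices are convex with a common dividing set; in particular the two boundary tori carry dividing curves of the \emph{same} slope. Since the boundary slope on $\mathbb{T}^2 \times \{j\}$ equals $-\tfrac{1}{n_j}$, invariance forces $n_1 = n_2 =: n$, which is exactly the stated hypothesis of identical surgery coefficients. After an $SL(2,\mathbb{Z})$ normalization to slope $-1$ with $\#\Gamma = 2$ on each boundary, Honda's classification (Theorem~\ref{thm: ClassOfTgtCSonT2xI}) together with the holonomy bijection of Lemma~\ref{lem: NonRotTightContStr} exhibits the invariant structure as a single distinguished representative (the one of vanishing holonomy). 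Thus the invariance assumption removes precisely the ambiguity in the choice of tight structure on $\mathbb{T}^2 \times [1,2]$ that was flagged in Subsection~\ref{subsec: CR1S}, and the glued piece becomes contactomorphic to Adachi's invariant neighborhood of the standard convex torus.

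Next I would match the gluing data by a homology computation. In the coordinates of Subsection~\ref{subsec: CR1S} the map $\phi$ sends $x \mapsto \mu_j$ and $y \mapsto n\mu_j + \lambda_{c_j}$ on $\mathbb{T}^2 \times \{j\}$. A dividing curve of slope $-\tfrac{1}{n}$ is homologous to $n x - y$, and $\phi(nx - y) = n\mu_j - (n\mu_j + \lambda_{c_j}) = -\lambda_{c_j}$, which is isotopic to the contact longitude, i.e.\ to the dividing set of the convex boundary $\partial N_{\delta}(L_j)$. Hence $\phi$ carries the fixed meridian $x$ to $\mu_j$ and the dividing curves to the dividing curves---which is verbatim Adachi's reglueing requirement. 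Running the implication backwards, Adachi's conditions $x \mapsto \mu_j$ and $nx - y \mapsto -\lambda_{c_j}$ force $y \mapsto n\mu_j + \lambda_{c_j}$, so the two identifications of $H_1(\mathbb{T}^2)$ coincide.

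Finally, since a diffeomorphism of $\mathbb{T}^2$ is determined up to isotopy by its action on $H_1$, and the germ of a tight contact structure near a convex torus is determined by its dividing set, the agreement on meridians and dividing curves upgrades to a contactomorphism between the two surgered manifolds that restricts to the identity on $N$. I expect the main obstacle to lie in this last normalization rather than in the topology: one must confirm that the meridian Adachi fixes really is the class sent to $\mu_j$ (and not some other primitive curve), and that the dividing slope of his standard convex torus, measured against that meridian, is exactly $-\tfrac{1}{n}$ for the common coefficient $n$; once these identifications are pinned down, checking that the boundary-parallel contactomorphisms patch across the two tori into a global one is routine.
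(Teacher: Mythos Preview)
Your approach is essentially the paper's: both constructions excise the same standard neighborhoods and reglue an $I$-invariant thickened torus, so the problem reduces to matching the boundary data (meridians and dividing curves), and your homology computation $\phi(nx-y)=-\lambda_{c_j}$ is exactly the computation the paper carries out.

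The one substantive difference lies in how the phrase ``identical coefficients'' is handled. The paper does not attempt to match Adachi's gluing for a general $n$. Instead it normalizes Adachi's standard convex torus to dividing slope $-1$, reads off that Adachi's prescription $c_j\mapsto\lambda_{c_j}$, $x\mapsto\mu_j$ forces $y\mapsto \mu_j+\lambda_{c_j}$, and thereby identifies Adachi's operation with the contact round $1$-surgery with coefficient $+1$ on both components. Only then does it invoke Theorem~\ref{thm: SameClassOfR1SD} (the topological statement that round $1$-surgeries with identical coefficients on both components are diffeomorphic), together with the observation that the same invariant piece is glued in each case, to conclude that \emph{every} identical coefficient $n$ yields the same contact manifold. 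You, by contrast, try to treat all $n$ at once and, as you correctly flag in your last paragraph, this leaves undetermined which $n$ is actually Adachi's: matching ``meridian to meridian and dividing curve to dividing curve'' is a condition satisfied by the gluing map for every $n$, but for different $n$ the dividing curve sits in a different homology class $nx-y$, so you have not yet shown these surgeries agree with one another. The paper's appeal to Theorem~\ref{thm: SameClassOfR1SD} is precisely what closes this gap; inserting that citation at the point where you want to pass from one value of $n$ to all of them would make your argument complete. Your use of Honda's classification and the holonomy map (Lemma~\ref{lem: NonRotTightContStr}) is a reasonable way to make the invariance hypothesis precise, though the paper gets by without it.
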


\begin{proof} 
While performing Legendrian round surgery, we remove the standard tubular neighborhood 
$N(L_1) \cup N(L_2)$ of a Legendrian link $L_1 \cup L_2$ and glue in an invariant 
tubular neighborhood of a standard convex torus. 
Let $x$ and $y$ denote the meridian and longitude curves on 
$\mathbb{T}^2 \times \{1,2\}$, which form the boundary of the thickened torus used in the gluing, such that the boundary slopes are $-1$ on both boundary components. Consequently, the dividing curves 
$c_j$ can be expressed as 
\[
    c_j = y-x, \quad \text{for each } j = 1, 2.
\]
During the gluing process, we identify the dividing curves and meridians of $\mathbb{T}^2 \times \{j\}$ with the corresponding dividing curves 
$\lambda_{c_j}$ and meridians $\mu_j$ of $\partial N(L_j)$, for $j = 1, 2$, as follows:
\[
\begin{aligned}
c_j &\mapsto \lambda_{c_j}, & \quad x &\mapsto \mu_j,\\
\Rightarrow\; y - x &\mapsto \lambda_{c_j}, & \quad x &\mapsto \mu_j,\\
\Rightarrow\; y &\mapsto \lambda_{c_j} + \mu_j, & \quad x &\mapsto \mu_j.
\end{aligned}
\]
In our framework, this operation corresponds to a contact round 1-surgery 
with coefficient $+1$ on both components $L_1$ and $L_2$, using a glued invariant 
tubular neighborhood of a standard convex torus.

By Theorem~\ref{thm: SameClassOfR1SD}, a (topological) round 1-surgery on 
$L_1 \cup L_2$ with identical contact round 1-surgery coefficients on both 
components yield the same topological 3-manifold. 
Since, in each case, we glue the invariant tubular neighborhood of the standard 
convex torus, the resulting contact structures on the 3-manifold coincide.
Therefore, performing a {contact round 1-surgery} on $L_1 \cup L_2$ with 
identical contact round 1-surgery coefficients on both components—and gluing in 
an invariant tubular neighborhood of the standard convex torus—is equivalent to 
performing a {Legendrian round surgery} on $L_1 \cup L_2$.
\end{proof}

\subsubsection{A remark on the difference between the contact round 1-surgery and Adachi's Legendrian round surgery.}

In Theorem~\ref{thm: JCR1StoCR1S}, we have realized Adachi's contact round 1-surgery (or, Legendrian round surgery) as a special case of contact round 1-surgery. 
In particular, Legendrian round surgery on the standard contact 3-sphere produces a tight contact structure on the resultant 3-manifold.
Here, we give an example of a contact round 1-surgery on $\left(\s^3,\xi_{st}\right)$ that produces an overtwisted contact structure on the resultant 3-manifold. 
For example, consider a Hopf link $L$ with contact round 1-surgery coefficient $-1$ on each component as shown in Figure \ref{fig: ExampleContR1Surgery}. 
We perform a contact round 1-surgery on it with a thickened torus having a rotative tight contact structure $\xi_{2m}^{+}$ (see Lemmas 5.2 and 5.3 in \cite{KH} for the descriptions of $\xi_{2m}^{+}$). 
The above surgery yields a circle bundle over a torus with an overtwisted contact structure.  
To see this, consider the following. 

After removing the interior of the standard tubular neighborhoods of the components, we get $(\s^3\setminus int(N(L)), \xi_{st}|_{\s^3\setminus int(N(L))})$. Since $L$ is a Hopf link, $\s^3\setminus int(N(L)) \cong \mathbb{T}^2 \times [0,1]$.
The boundary slopes are $-1$ with respect to the canonical coordinates of the link components. 
We need to express boundary slopes in only one coordinate system. Since $L$ is a Hopf link, the meridian of one component maps to the longitude of the other and vice versa. Thus, fixing one canonical coordinate system as the coordinates of the $\mathbb{T}^2\times [0,1]$ is sufficient. As a result, the tight contact structure $\xi_{st}|_{\s^3\setminus int(N(L))}= \xi_{st}|_{\mathbb{T}^2 \times [0,1]}$ with boundary slopes $-1$. Moreover, since there is no Giroux torsion in $\xi_{st}$, the $ \xi_{st}|_{\mathbb{T}^2 \times [0,1]}$ is minimal twisting non-rotative.

\begin{figure}[ht]
    \centering
    \includegraphics[scale=0.4]{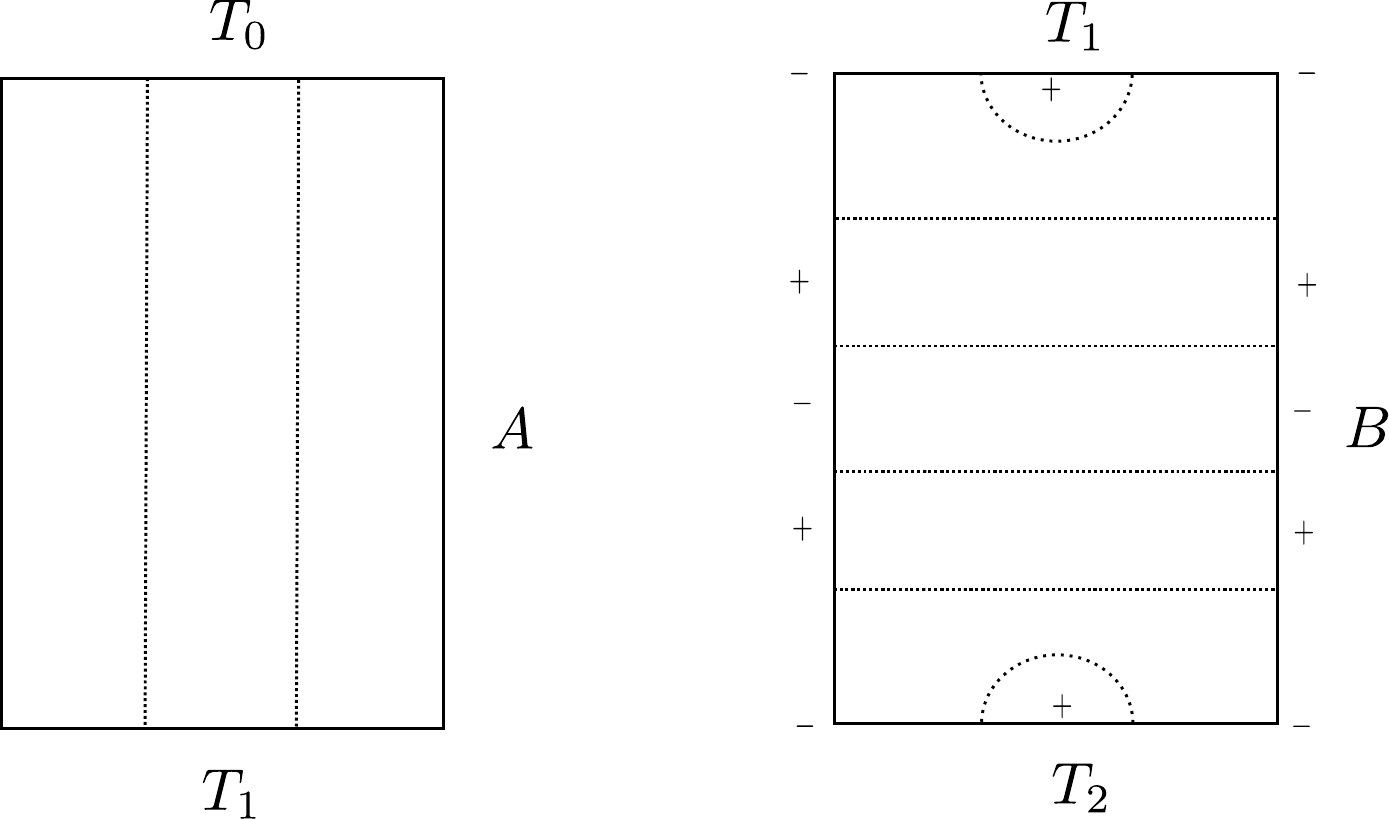}
    \caption{One of the possible configurations of the dividing curves on the annulus A and B in black dotted curves. In each rectangle, the left and right sides are identified.}
    \label{fig: gluingAnnuliToOD}
\end{figure}

By Giroux's flexibility theorem \cite{Giroux-Convex91}, without loss of generality, we suppose that the Legendrian rulings have $\infty$ slope on the boundary tori. From Lemma \ref{lem: NonRotTightContStr}, we know that there is a convex annulus $A$ with different configurations of the dividing curves for each integer $n\in \mathbb{Z}$. 
In Figure \ref{fig: gluingAnnuliToOD}, we have shown one such configuration corresponding to $n=0$ in the Lemma \ref{lem: NonRotTightContStr}. 
On the glued thickened torus $\mathbb{T}^2 \times [1,2]$, there is a convex annulus $B$ with two boundary parallel dividing curves on both faces, bounding a positive singularity as shown in the Figure \ref{fig: gluingAnnuliToOD}. 
In our contact round 1-surgery, annulus $A$ is glued to $B$, and we obtain a null-homotopic dividing curve on the convex torus. By Giroux's criterion (Theorem \ref{thm: Giroux Criterion}), a neighborhood of this torus is not tight. Hence, the resultant 3-manifold is overtwisted.

\subsubsection{Realizaton of Adachi's contact round surgery of index 2 on $\left(\s^3, \xi_{st}\right)$ as a contact round 2-surgery performed on a Legendrian knot $K\subset \left(\s^3, \xi_{st}\right)$} 
Adachi introduced a version of contact round 2-surgery in \cite{Jiro19}  on a contact 3-manifold $(M, \xi)$ as an attachment of the symplectic round handle to the concave end along a convex torus of the symplectization $(M\times [0,1], d(e^t\alpha))$.

In particular, Adachi's contact round surgery of index 2 can be described as follows. 
Suppose $T\subset (M, \xi)$ is the standard convex torus with two parallel dividing curves on it. 
Choose a simple closed curve $m \in H_1(T, \mathbb{Z})$ to be a meridian such that it intersects each dividing curve once. 
The chosen meridian $m$ in the surgery is called the {\it surgery meridian}. 
The dividing curve and the meridian $m$ give a coordinate system on the torus $T$. 

\begin{enumerate}
    \item Remove the interior $T\times (-\epsilon, \epsilon)$ of the invariant neighborhood of $T\subset(M, \xi)$. 
    \item Reglue the two standard tubular neighborhoods of Legendrian knots to $T\times \{\pm\epsilon\}$ $\subset \dd \{M\setminus (T\times (-\epsilon, \epsilon))\}$ so that dividing curves and the meridians agree with the same on $T\times \{\pm\epsilon\}$, respectively. 
\end{enumerate}

\begin{thm}\label{thm: JCR2SasCR2S}
Adachi's contact round 2-surgery on a convex torus $T\subset \mathbb{S}^3$ is the same as contact round 2-surgery on a Legendrian knot $K \subset \s^3$ with contact round surgery coefficient $n$ for some $n\in \mathbb{Z}$, where $\dd N_{\delta}(K)= T$ for  $\delta>0$.   Moreover, the integer $n$ depends only on the choice of the surgery meridian.   
\end{thm}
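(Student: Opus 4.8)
The plan is to realize the convex torus $T$ as the boundary of a standard neighborhood of a Legendrian knot, and then to check that Adachi's two-step recipe and our contact round $2$-surgery recipe from Subsection~\ref{subsec: CR2S} remove the same thickened torus and reglue solid tori with identical attaching and contact data. First I would invoke the standard neighborhood theorem for convex tori: since $T$ is a standard convex torus in $(\s^3,\xi_{st})$ with exactly two dividing curves, the solid torus $V$ that it bounds inherits a tight contact structure whose core admits a Legendrian representative $K$ with $\partial N_\delta(K)=T$, and the dividing set $\Gamma_T$ consists of two parallel copies of the contact longitude $\lambda_c$. With this identification, Adachi's invariant neighborhood $T\times(-\epsilon,\epsilon)$ is exactly the thickened torus $N_{\delta+\epsilon}(K)\setminus \mathrm{int}\,N_{\delta-\epsilon}(K)$; setting $\delta_1=\delta-\epsilon$ and $\delta_2=\delta+\epsilon$, the piece removed in the two constructions coincides, and the two boundary tori $T\times\{\pm\epsilon\}$ are identified with $\partial N_{\delta_1}(K)$ and $\partial N_{\delta_2}(K)$.

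Next I would match the gluing data. In the contact-framing coordinates $(\mu,\lambda_c)$ on $T$, the dividing curves are parallel to $\lambda_c$, so Adachi's requirement that the surgery meridian $m$ meet each dividing curve exactly once forces $m\cdot\lambda_c=\pm1$; writing $m=a\mu+n\lambda_c$ this gives $a=\pm1$, and after fixing orientations $m=\mu+n\lambda_c$ with $n\in\mathbb{Z}$. This $m$ is precisely the image of the meridian of the solid torus glued in our contact round $2$-surgery, so the integer $n$ recording the $\lambda_c$-twisting of $m$ is the contact round $2$-surgery coefficient, and in our notation it sits in the nondegenerate branch $p\neq0$ of Subsection~\ref{subsec: CR2S}. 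It then remains to see that the contact structures agree. Because $m$ meets $\Gamma_T$ minimally (once per dividing curve), the meridian disk of each glued solid torus meets the dividing set in only two points, so the tight contact structure on each $\s^1\times\D^2$ with this boundary data is unique by Honda's classification; on the other hand, this unique tight filling is exactly the standard Legendrian neighborhood that Adachi glues in. Hence the regluings on both boundary tori agree as contact solid tori, and the two constructions produce contactomorphic results $(M,\z)\sqcup(L(a,b),\chi)$, the exterior side yielding the contact Dehn-surgery piece and the inner side $N_{\delta_1}(K)$ yielding the lens-space summand.

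Finally, the integrality and the last sentence of the statement follow from the coordinate computation above: the coefficient equals the $\lambda_c$-coordinate $n$ of $m$ in the basis $(\mu,\lambda_c)$, which is manifestly a function of $m$ alone and of no auxiliary choice such as $\delta$, $\epsilon$, or the parametrization of the invariant neighborhood. I expect the main obstacle to be the contact-geometric matching rather than the bookkeeping: one must argue carefully that Adachi's standard tubular neighborhood of a Legendrian knot, glued so that dividing curves and meridians agree, coincides with the unique tight solid torus appearing in our construction. The cleanest route is to verify that both fillings are the unique tight solid torus determined by the pair (boundary slope of $\Gamma_T$, meridian $m$) and to invoke the uniqueness in Honda's classification (Theorem~\ref{thm: ClassOfTgtCSonT2xI} and its solid-torus analogue) to conclude they are contactomorphic rel boundary; the hypothesis $p\neq0$ needed to apply this uniqueness is guaranteed precisely because $m$ is transverse to, rather than parallel to, $\Gamma_T$.
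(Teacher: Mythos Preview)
Your strategy is the paper's: realize $T$ as $\partial N_\delta(K)$ for a Legendrian core $K$, identify Adachi's invariant collar $T\times(-\epsilon,\epsilon)$ with the thickened torus $N_{\delta_2}(K)\setminus\mathrm{int}\,N_{\delta_1}(K)$, and read the surgery coefficient off the chosen surgery meridian $m$ in the basis $(\mu,\lambda_c)$. Your extra paragraph matching the two glued solid tori as the unique tight fillings of the prescribed boundary data (via Honda's classification) goes beyond what the paper writes out and is a reasonable way to justify the contact side of the correspondence; the paper's own proof confines itself to the homological bookkeeping and does not spell this step out.

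The gap is in that bookkeeping. From $m\cdot\lambda_c=\pm1$ you (correctly) deduce that the $\mu$-coefficient of $m$ equals $\pm1$ and write $m=\mu+n\lambda_c$. But in the convention fixed in Subsection~\ref{subsec: CR2S} (and in the topological round $2$-surgery preceding it) a coefficient $p/q$ means the glued meridian is sent to $p\,\mu+q\,\lambda_c$; your $m$ therefore has $p=1$, $q=n$ and hence coefficient $1/n$, not the integer $n$ you assert. The paper's computation instead records $m=n\,\mu+\lambda_c$, i.e.\ the $\lambda_c$-coefficient equal to $1$, which is exactly what is needed to conclude that the contact round $2$-surgery coefficient is the integer $n$. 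As written, your expression for $m$ does not yield an integral coefficient, so the theorem's conclusion does not follow from your argument; you need to revisit which coordinate of $m$ is actually forced by the condition that $m$ meet each dividing curve once, and reconcile it with the surgery-coefficient convention used in the paper before claiming the coefficient is $n$.
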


\begin{proof}
We know there is an ambiguity in the choice of meridian while gluing a solid torus in Adachi's contact round 2-surgery. 
By construction, the surgery meridian is an image of the meridian of the glued solid torus.
In our description of contact round 2-surgery, the round 2-surgery coefficient corresponds to the image of the meridian. 
Thus, the surgery meridian and the curves corresponding to the round surgery coefficient are the same.
In our definition of contact round 2-surgery, there is a natural choice of meridian $\mu$. 
We take $T\times \{0\}= \dd N(K)$ or $T\times [-\epsilon, \epsilon] = N_{\delta_2}(K) \setminus int(N_{\delta_1}(K))$.  
We take $\mu$ as the closed curve on $\dd N_{\delta_2} (K)$ that bounds a meridional disk in $N_{\delta_2}(K)$, and contact longitude $\lambda_c$ is given by a dividing curve. 
Suppose we have an identification $T\times \{0\}$ with $\mathbb{R}^2/\mathbb{Z}^2$ with $\lambda_c \mapsto (1,0)$ and $\mu \mapsto (0,1)$. 
Since surgery meridian $m$ intersects each dividing curve once, the coefficient of contact longitude is $1$. 
We may express the surgery meridian as $m = n \cdot \mu + \lambda_c$.
Therefore, the contact round 2-surgery coefficient is $n$.
\end{proof}

\section{A Correspondence Between the Contact Round Surgery Diagrams and Contact Dehn Surgery Diagrams}

In the following, we use the term round surgery and contact round surgery to emphasize the difference between topological round surgery and contact round surgery. 
In this section, we define a joint pair of contact round surgeries of indices 1 and 2 so that contact round surgeries on joint pairs yield a \emph{connected} contact 3-manifold.  
Suppose $L= L_{1} \cup L_{2}$ is a Legendrian link such that it is a joint pair of round surgeries of indices 1 and 2 with round 1-surgery coefficient $n_{j}$ on $L_{j}$, for $j=1, 2$, and round 2-surgery coefficient $m=\frac{p}{q}$ on $L_{2}$ with respect to the respective contact longitudes (See Figure \ref{fig: ContJP}).
We consider standard tubular neighborhoods $N_{\delta_2}(L_{j})$ of Legendrian link components $L_{j}$ such that $N_{\delta_2}(L_{1})\cap N_{\delta_2}(L_{2})=\emptyset$. By definition of a joint pair, we perform a round 1-surgery on $int(N_{\delta_1}(L_{1}))\cup int(N_{\delta_1}(L_{2}))$ and a round 2-surgery on $N_{\delta_2}(L_{2})\setminus int(N_{\delta_1}(L_{2}))$. 

\begin{defn}
    A joint pair $L_{1}\cup L_{2} \subset \left(\s^3, \xi_{st}\right)$ is said to be a {\it contact joint pair} if 
    \begin{enumerate}
        \item $L_{1}$ and $L_{2}$ are both Legendrian knots, 
        \item and the integers $n_{1}$ and $n_{2}$ denote the contact round 1-surgery coefficients on $L_{1}$ and $L_{2}$, respectively, and rational number $m$ denotes a contact round 2-surgery coefficient on $L_{2}$. 
    \end{enumerate}
    We perform a contact round 1-surgery on  $int(N_{\delta_1}(L_{1}))\cup int(N_{\delta_1}(L_{2}))$ and a contact round 2-surgery on $N_{\delta_2}(L_{2})\setminus int(N_{\delta_1}(L_{2}))$, where $0< \delta_1 < \delta_2$. 
\end{defn}

For contact round 1-surgery, we remove $int(N_{\delta_1}(L_{1}))\cup int(N_{\delta_1}(L_{2}))$ from $\left(\s^3, \xi_{st}\right)$ and glue $\mathbb{T}^2 \times [1,2]$ with some tight contact structure as discussed in Subsection \ref{subsec: CR1S}. 
For the contact round 2-surgery, we remove $N_{\delta_2}(L_{2})\setminus int(N_{\delta_1}(L_{2}))$ from the resulting 3-manifold. 
As a result, we obtain $N$ with some contact structure. 
The 3-manifold with boundary $N$ is diffeomorphic to $\overline{\s^3 \setminus int(N(L))}$ from Lemma~\ref{lem: RemovedTorusNbd}. 
Observe that $\dd N = \dd N_{\delta_2}(L_{2}) \sqcup \mathbb{T}^2 \times \{2\}$. 
We glue one copy of the solid torus along each boundary component, so two in total.
We denote the boundary torus by $T_j$ if it is glued to $\dd N_{\delta_j}(L_{2})$ for $j=1,2$. 
On $\dd T_j$, suppose $m_j$ denotes the meridian, and it maps to $s_j$. 
On $\dd N_{\delta_2}(L_{2})$,  $m_2 \mapsto p \cdot \mu_2 + q \cdot \lambda_{c_2}$. 
In round  1-surgery, the boundary torus $\dd N_{\delta_1}(L_{2})$ is identified with $\mathbb{T}^2 \times \{2\}$ and the dividing curve $x+ (-n_{2})\cdot y $ is glued to $\lambda_{c_2}$. 
On $\mathbb{T}^2\times \{2\}$, $s_1 = p\cdot \mu_2 + q \cdot \lambda_{c_2}= (p-n_{2})\cdot y +q \cdot x $. 
We glue $T_j$ to $\dd N_{\delta_j}(L_{2})$ by mapping the meridian $m_j$ to $s_j$ as per the procedure of contact round 2-surgery in Subsection \ref{subsec: CR2S}. 

\begin{figure}[ht]
    \centering
    \includegraphics[scale=0.4]{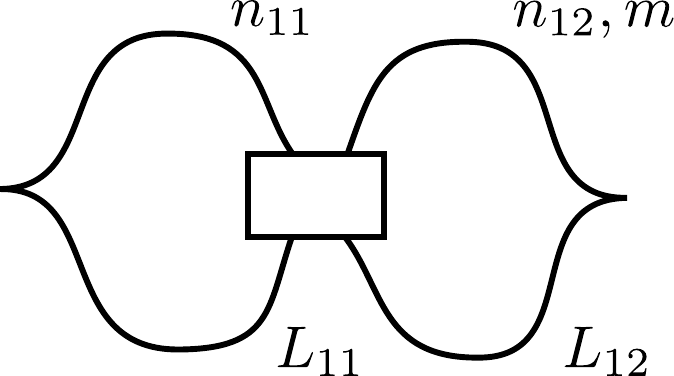}
    \caption{Schematic front diagram of a contact joint pair $L_{1} \cup L_{2}$ with contact round 1-surgery coefficients $n_{1}$ on $L_{1}$ and $n_{2}$ on $L_{2}$, and contact round 2-surgery coefficient $m$ on $L_{2}$. The middle box in the figure represents all the knotting of individual components and the linking of components.}
    \label{fig: ContJP}
\end{figure}

Suppose $L=L_1\cup \cdots \cup L_n$ is a Legendrian link.
We take standard tubular neighborhoods $N(L_i)$ of each $L_i$ and remove their interior from $\left(\s^3, \xi_{st}\right)$. 
As a result, we get $M= \s^3 \setminus \{int(N(L_1))\cup \cdots \cup int(N(L_n))\}$. 
$M$ is a 3-manifold with $n$-many toroidal boundary components. 

\begin{lem}\label{lem: Remove_N(L)}
    $\s^3 \setminus int (N(L))$ has a tight contact structure induced from the $\xi_{st}$ with boundary tori satisfying following conditions. 
    
    \begin{enumerate}
        \item Each boundary torus $\dd N(L_i)$ is a convex torus with two parallel dividing curves 
        \item and the boundary slopes are equal to the reciprocal of the Thurston-Bennequin invariant of $L_i$, i.e., $1/tb(L_i)$, with respect to the canonical coordinates on $\dd N(L_i)$.
    \end{enumerate}
\end{lem}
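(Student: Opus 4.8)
The plan is to establish the two asserted properties separately, after first handling tightness. For tightness, I would invoke the fact that $\xi_{st}$ is tight on $\s^3$ together with the standard observation that tightness is inherited by submanifolds: if $\s^3\setminus \mathrm{int}(N(L))$ contained an overtwisted disk, then so would $(\s^3,\xi_{st})$, contradicting its tightness. Hence the contact structure induced on the link exterior by restricting $\xi_{st}$ is tight, and it suffices to analyze each boundary torus $\dd N(L_i)$ in turn.

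For the boundary data, I would work one component at a time in the standard model. By the Legendrian neighborhood theorem, each standard neighborhood $N(L_i)=N_\delta(L_i)$ is contactomorphic to $(\s^1\times\R^2,\ker(\cos z\,dx-\sin z\,dy))$ with $L_i$ carried to the spine $\s^1\times\{0\}$. As recalled in the preliminaries, the radial field $X=x\,\dd_x+y\,\dd_y$ is a contact vector field transverse to $\dd N_\delta(L_i)$, so each boundary torus is convex; this gives the first half of condition (1). To pin down the dividing set, I would parametrize $\dd N_\delta(L_i)$ by $(z,\theta)\in\s^1\times\s^1$ via $x=\delta\cos\theta$, $y=\delta\sin\theta$ and compute directly. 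Writing $\alpha=\cos z\,dx-\sin z\,dy$, one gets $\alpha(X)=x\cos z-y\sin z=\delta\cos(z+\theta)$, which vanishes exactly along the two parallel curves $\{z+\theta=\pi/2\}$ and $\{z+\theta=3\pi/2\}$. Thus $\Gamma_{\dd N_\delta(L_i)}$ consists of precisely two parallel dividing curves, completing (1).

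For the slope in condition (2), I would use that these dividing curves are isotopic to the contact longitude $\lambda_c$. By the formula recalled in the preliminaries, $\lambda_c=tb(L_i)\cdot\mu+\lambda$ in terms of the meridian $\mu$ and the canonical (Seifert) longitude $\lambda$. In the canonical coordinate system one identifies $\mu$ with $(1,0)$ and $\lambda$ with $(0,1)$, so $\lambda_c$ has direction $(tb(L_i),1)$ and hence slope $1/tb(L_i)$, measuring slope as the ratio of the $\lambda$-coefficient to the $\mu$-coefficient; this is the same convention under which the glued thickened torus in Subsection~\ref{subsec: CR1S} carries slope $-\tfrac{1}{n_j}$. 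This yields (2).

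The main obstacle I anticipate is bookkeeping rather than conceptual: one must verify that the identification furnished by the Legendrian neighborhood theorem carries the model meridian and the model (contact) longitude to $\mu$ and $\lambda_c$ on $\dd N(L_i)$ compatibly with the canonical coordinate system, so that the slope is read off correctly and consistently with the sign conventions fixed elsewhere in the paper. Once these conventions are aligned, both the two-curve count and the slope follow from the short computations above.
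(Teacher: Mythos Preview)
Your proposal is correct and follows the same approach as the paper: both identify the dividing curves on $\partial N(L_i)$ with the contact longitude $\lambda_{c_i}=tb(L_i)\cdot\mu_i+\lambda_i$ and read off the slope $1/tb(L_i)$ in the canonical coordinates. Your version is simply more explicit, carrying out the model computation $\alpha(X)=\delta\cos(z+\theta)$ to verify convexity and the two-curve count, and separately noting the inheritance of tightness, whereas the paper invokes these standard facts without further justification.
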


In particular, $\s^3 \setminus int (N(L))$ has a tight contact structure induced from $\xi_{st}$ with boundary tori satisfying the conditions mentioned in the above Lemma \ref{lem: Remove_N(L)}. 

\begin{proof}[Proof of the Lemma \ref{lem: Remove_N(L)}]
  Since we have removed the interior of the standard tubular neighborhoods $N(L_i)$ of each $L_i$ from $\left(\s^3, \xi_{st}\right)$, on $\dd N(L_i)$, the $\dd N_{\delta_1}(L_i)$ is a convex torus with two dividing curves isotopic to contact longitude $\lambda_{c_i}$. 
    Since $L_i \subset \s^3$, we can express $\lambda_{c_i}$ in terms to canonical longitude $\lambda_i$ as $\lambda_{c_i}= tb(L_i)\cdot \mu_i + \lambda_i$. 
    We identify boundary torus $\dd N(L_i)$ with $\mathbb{R}^2/\mathbb{Z}^2$ by mapping $\mu_i\mapsto (1,0)$ and $\lambda_i \mapsto (0,1)$. 
    We get slope $s(\dd N(L))=1/tb(L_i)$ under this identification. 
    Therefore, $\s^3 \setminus int(N(L))$ has tight contact structure induced from the $\xi_{st}$ with boundary slope $1/tb(L_i)$. 
\end{proof}

\begin{rem}
    The above boundary conditions are not sufficient to determine a tight contact structure on $\s^3 \setminus int(N(L))$ to be the restriction of $\xi_{st}$. 
    For example, consider $L$ to be a Hopf link. We get a thickened torus $\mathbb{T}^2 \times I$ as $\s^3 \setminus int(N(L))$. 
    By Ko Honda's classification of the tight contact structure on $\mathbb{T}^2 \times I$ in \cite{KH}, we know that there are infinitely many tight contact structures on $\mathbb{T}^2 \times I$ satisfying the same boundary conditions. 
\end{rem}

Let the Legendrian link $L_{1}\cup L_{2}$ be a joint pair with round 1-surgery coefficient $n \in \mathbb{Z}$ on $L_{1}$ and $L_{2}$ and $m\in \mathbb{Q}$ on $L_{2}$. 
From Lemma~\ref{lem: RemovedTorusNbd}, we know that the effect of round 1-surgery and a removal of the embedded thickened torus for round 2-surgery is $M = \s^3 \setminus \{int(N(L_{1}))\cup int (N(L_{2}))\}$. 
In the following lemma, we prove that by a specific choice of tight contact structure on glued $\mathbb{T}^2 \times I $ for round 1-surgery corresponds to $(M, \xi_{st}|_M)$. 

\begin{lem}\label{lem: ResOfStdTightCont}
    Suppose, in the above setting, the glued thickened torus $\mathbb{T}^2 \times [1,2]$ in contact round 1-surgery has a tight contact structure satisfying the following conditions:  
    \begin{enumerate}
        \item The boundary tori $\mathbb{T}^2 \times \{1\}$ and $\mathbb{T}^2 \times \{2\}$ are convex with two parallel dividing curves of slopes $n\in \mathbb{Z}$ on each of them,
        \item and $\mathbb{T}^2 \times I$ has a minimal twisting non-rotative tight contact structure corresponding to the preimage of $0$ under the holonomy map defined in Lemma \ref{lem: NonRotTightContStr}.  
    \end{enumerate}
    Then $M $ has a restriction of the standard tight contact structure from $\s^3$. 
\end{lem}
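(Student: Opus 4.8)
The plan is to decompose $M$ as the union of a standard piece and a single glued thickened torus, and then to exchange the glued torus for the corresponding standard collar inside $(\s^3, \xi_{st})$ without changing the contactomorphism type. Writing $B := \s^3 \setminus \{int(N_{\delta_1}(L_1)) \cup int(N_{\delta_2}(L_2))\}$, the contact round $1$-surgery glues $\mathbb{T}^2 \times [1,2]$ to the link complement along $\dd N_{\delta_1}(L_1)$ and $\dd N_{\delta_1}(L_2)$, and the subsequent removal of the round $2$-collar $N_{\delta_2}(L_2)\setminus int(N_{\delta_1}(L_2))$ detaches the face $\mathbb{T}^2\times\{2\}$. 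At the level of smooth manifolds this is exactly Lemma~\ref{lem: RemovedTorusNbd}; recording it at the contact level gives $M = B \cup_{\dd N_{\delta_1}(L_1)} (\mathbb{T}^2 \times [1,2])$, with contact structure equal to $\xi_{st}|_B$ on $B$ and to the chosen tight structure $\eta$ on the glued piece. In particular $\dd N_{\delta_2}(L_2)$ carries $\xi_{st}$ unchanged, so on the $L_2$ side there is nothing to check.

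Next I would produce a matching decomposition of $\xi_{st}$ on the link complement itself. In the standard neighborhood $(\s^1 \times \R^2, \text{ker}(\cos z\, dx - \sin z\, dy))$ of $L_1$, pick $\delta_0 < \delta_1$ and set $R := N_{\delta_1}(L_1)\setminus int(N_{\delta_0}(L_1))$. The radial field $X = x\,\dd_x + y\,\dd_y$ satisfies $\mathcal{L}_X\alpha = \alpha$ for $\alpha = \cos z\, dx - \sin z\, dy$ and is transverse to every torus $\dd N_\delta(L_1)$, so $(R, \xi_{st}|_R)$ is a vertically invariant tight thickened torus. Taking $N(L_1) = N_{\delta_0}(L_1)$ and $N(L_2) = N_{\delta_2}(L_2)$, this exhibits $\s^3 \setminus int(N(L)) = B \cup_{\dd N_{\delta_1}(L_1)} R$ with $\xi_{st} = \xi_{st}|_B \cup \xi_{st}|_R$: the same decomposition as for $M$, but with the standard collar $R$ in place of $\mathbb{T}^2\times[1,2]$ and the same common piece $(B,\xi_{st}|_B)$.

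The heart of the argument is then to identify $\eta$ with $\xi_{st}|_R$ rel boundary. Both are minimal-twisting, non-rotative tight structures on a thickened torus whose two boundary tori each carry two parallel dividing curves of the same slope (slope $n$ in the coordinate identification fixed by the surgery, matching the contact longitude $\lambda_{c_1}$ under the gluing, and satisfying $s_0 = s_1$ as required by Lemma~\ref{lem: NonRotTightContStr}). The vertical invariance of $\xi_{st}|_R$ established above exhibits it as the $I$-invariant member of this family, which is precisely the preimage of $0$ under the holonomy map of Lemma~\ref{lem: NonRotTightContStr}; by hypothesis (condition~(2)) $\eta$ is this same class. Since the holonomy map is a bijection, $\eta$ and $\xi_{st}|_R$ are isotopic rel boundary, the two gluings over $B$ therefore agree, and $(M, \xi_{st}|_B \cup \eta)$ is contactomorphic to $(\s^3 \setminus int(N(L)), \xi_{st})$, as claimed.

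I expect the main obstacle to be the bookkeeping that makes the two decompositions literally coincide: checking that the slope and the count $\#\Gamma = 2$ of dividing curves on $\mathbb{T}^2\times\{1\}$ agree with those of $\dd N_{\delta_1}(L_1)$ after the $SL(2,\mathbb{Z})$-normalization used in Theorem~\ref{thm: ClassOfTgtCSonT2xI} and Lemma~\ref{lem: NonRotTightContStr}, and confirming that the $I$-invariant collar is normalized to the value $0$ of the holonomy map rather than to some other integer. The computation $\mathcal{L}_X\alpha = \alpha$ is the concrete input that pins $\xi_{st}|_R$ to this distinguished $0$-class; once the normalizations are reconciled, everything else is a direct application of Honda's classification.
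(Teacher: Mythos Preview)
Your proposal is correct and follows essentially the same route as the paper: both decompose $M$ as the standard piece $B$ together with the glued thickened torus, identify that torus with the $I$-invariant collar of $\partial N_{\delta_1}(L_1)$ via the holonomy-$0$ class in Lemma~\ref{lem: NonRotTightContStr}, and conclude that the contact structure is the restriction of $\xi_{st}$. The paper packages the slope bookkeeping you flag as an obstacle into its Claim~1 and handles your identification of $\eta$ with $\xi_{st}|_R$ in its Claim~2, while you make the $I$-invariance of the standard collar explicit via $\mathcal{L}_X\alpha = \alpha$; these are minor differences in emphasis rather than substance.
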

\begin{proof}
    From Lemma~\ref{lem: RemovedTorusNbd}, we know that $M \cong \s^3 \setminus int(N(L))$. 
    Thus, we only need to show that the contact structure $\xi$ on $M$ is contactomorphic to $\xi_{st}$.

    {\bf Claim 1.} The boundary tori $\dd M = \dd N(L_{1}) \cup \dd N(L_{2})$ satisfy the necessary boundary conditions.
    
    {\it Proof of claim 1.} It is sufficient to prove that the contact structure on $M$ satisfies conditions mentioned in Lemma \ref{lem: Remove_N(L)}.
    By definition of contact joint pair, the boundary torus $\dd N_{\delta_2}(L_{2})$ is a convex torus obtained as a boundary of the standard tubular neighborhood of $L_{2}$. 
    Thus, the slope $s(\dd N_{\delta_2}(L_{2}))= 1/tb(L_{2})$. 

    On $\dd N_{\delta_1}(L_{2})$, which is glued to $\mathbb{T}^2 \times \{2\}$, we have two sets of coordinates curves: one with contact longitude  $\lambda_{c_1}$ and meridian $\mu_1$ given by the standard neighborhood of $L_{2}$ and other is given by the natural coordinates of $\mathbb{T}^2 \times [1,2]$ with meridian $x$ and longitude $y$.
    Recall that, under the gluing of round 1-surgery $x \mapsto \mu_2$ and $y \mapsto n\cdot \mu_2 + \lambda_{c_2}$. Thus, the dividing curve is given by $\lambda_{c_2}^{new}= -n\cdot x +y$. 
    On $\dd N_{\delta_1}(L_{1})$, we have $x \mapsto \mu_1$ and $y \mapsto n\cdot \mu_1 + \lambda_{c_1}$. 
    It implies that $-n\cdot x + y \mapsto - n\cdot\mu_1 + n\cdot \mu_1 + \lambda_{c_1}= \lambda_{c_1}$. 
    Therefore, the slopes of each boundary torus are given by $1/tb(L_{j})$. 
    Hence, $\xi$ is a tight contact structure on $\s^3\setminus int(N(L))$ satisfying the necessary boundary conditions. 

{\bf Claim 2.} The glued thickened torus in $M$ is contactomorphic to the $I$-invariant neighborhood of $\dd N_{\delta_1}(L_{1})$. 

{\it Proof of claim 2.} 
By Lemma \ref{lem: NonRotTightContStr}, $\mathbb{T}^2 \times I$ with minimal twisting non-rotative tight contact structure corresponding to the preimage of $0$ under the holonomy map is an $I$-invariant neighborhood of  $\mathbb{T}^2 \times \{3/2\}$. 
We glue $\mathbb{T}^2\times \{1\}$ to $\dd N_{\delta_1}(L_{1})$ such that the dividing curves of $\mathbb{T}^2\times \{1\}$ maps to the dividing curves of $\dd N_{\delta_1}(L_{1})$. 
Since $\mathbb{T}^2 \times \{1\}$ is isotopic to $\mathbb{T}^2 \times \{\frac{3}{2}\}$, the glued $\mathbb{T}^2 \times I$ can be realized as an $I$-invariant neighborhood of $\dd N_{\delta_1}(L_{1})$. 
Thus, the proof of Claim 2 is complete. 

The contact structure on $M$ is the restriction of $\xi_{st}$ in the complement of the glued thickened torus. 
By Claim 2, the glued thickened torus does not change that contact structure on $M$.
Thus, the contact structure on $(M, \xi) \subset \left(\s^3, \xi_{st}\right)$ is the restriction of $\xi_{st}$. 
\end{proof}

\begin{lem}\label{lem: RealOfContJointPairAsContDehnSur}
   Let $L_{1}\cup L_{2}$ be a contact joint pair with round 1-surgery coefficient $n$ on both components and round 2-surgery $m$ on $L_{2}$, where $m\in \{\pm1\}$. 
    Assume that after the round 1-surgery and removing thickened torus $N_{\delta_2}(L_{2})\setminus N_{\delta_1}(L_{2})$ the contact structure on $\s^3 \setminus N(L)$ is $\xi_{st}|_{\s^3 \setminus N(L)}$. 
   Then the round surgery along the joint pair $L_{1}\cup L_{2}$ is equivalent to contact $m$-surgeries along the link.
\end{lem}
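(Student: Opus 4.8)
The plan is to split the round surgery along the joint pair into its two solid-torus gluings and to identify each one with a contact Dehn surgery of coefficient $m$ on a single link component. By the standing hypothesis together with Lemma~\ref{lem: RemovedTorusNbd}, the combined effect of the contact round 1-surgery and the removal of the thickened torus $N_{\delta_2}(L_{2})\setminus int(N_{\delta_1}(L_{2}))$ is the manifold $M=\s^3\setminus int(N(L))$ equipped with $\xi_{st}|_M$, whose boundary is $\dd M=\dd N_{\delta_2}(L_{2})\sqcup(\mathbb{T}^2\times\{2\})$. The round 2-surgery then attaches two solid tori $T_1,T_2$, with $T_2$ glued along $\dd N_{\delta_2}(L_{2})$ and $T_1$ along $\mathbb{T}^2\times\{2\}$, and I would analyze the two attachments in turn.

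First I would dispatch the attachment of $T_2$. The torus $\dd N_{\delta_2}(L_{2})$ is precisely the boundary of the standard neighborhood of $L_{2}$ sitting in the knot exterior, so gluing $T_2$ with its meridian sent to $s_2=p\cdot\mu_2+q\cdot\lambda_{c_2}$, where $p/q=m$, is by definition a contact Dehn surgery on $L_{2}$ with contact coefficient $m$. Since $m\in\{\pm1\}$, the extension of the contact structure over $T_2$ is unique, so this attachment is exactly a contact $m$-surgery on $L_{2}$.

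The core of the argument, and the step I expect to be the main obstacle, is the attachment of $T_1$ along $\mathbb{T}^2\times\{2\}$. Here I would invoke Claim 2 of Lemma~\ref{lem: ResOfStdTightCont}: the round-1 thickened torus is an $I$-invariant collar, so $\mathbb{T}^2\times\{2\}$ is isotopic across it to $\dd N(L_1)$ without changing the contact structure, whence attaching $T_1$ is a contact Dehn surgery on $L_1$. To determine its coefficient I would transport the framing curve $s_1=p\cdot\mu_2+q\cdot\lambda_{c_2}$ through the round-1 identifications, using $x\mapsto\mu_2,\ y\mapsto n\mu_2+\lambda_{c_2}$ on $\mathbb{T}^2\times\{2\}$ and $x\mapsto\mu_1,\ y\mapsto n\mu_1+\lambda_{c_1}$ on $\mathbb{T}^2\times\{1\}$, where $n_1=n_2=n$. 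Rewriting $s_1$ in the $(x,y)$-basis and pushing it to $\dd N(L_1)$, the two contributions of $n$ cancel and one obtains $s_1=p\cdot\mu_1+q\cdot\lambda_{c_1}$, so the attachment of $T_1$ is again a contact $m$-surgery on $L_1$. The delicate points will be to verify this cancellation precisely and to confirm that the $I$-invariance really transports the attaching data from $\mathbb{T}^2\times\{2\}$ to $\dd N(L_1)$ with no extra twisting.

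Finally I would assemble the pieces: since the contact structure on $M$ is $\xi_{st}|_M$ and both attachments carry coefficient $m=\pm1$, for which the contact structure over each solid torus is uniquely determined, the two gluings together realize exactly the contact $m$-surgery on $L_1\cup L_2$. Hence the round surgery along the joint pair is equivalent to contact $m$-surgeries along the link, as claimed.
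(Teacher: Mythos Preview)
Your proof is correct and follows essentially the same approach as the paper: reduce to $(\s^3\setminus int(N(L)),\xi_{st}|)$ via Lemma~\ref{lem: RemovedTorusNbd} and the standing hypothesis, then identify each of the two solid-torus gluings as a contact $m$-surgery by the framing computation in which the two contributions of $n$ cancel. One small correction: you invoke Claim~2 of Lemma~\ref{lem: ResOfStdTightCont}, but that lemma carries a different hypothesis (the specific holonomy-$0$ tight structure on the glued $\mathbb{T}^2\times[1,2]$) than the present statement, so you are not entitled to cite it directly; the paper instead cites Claim~1, whose proof is purely the boundary/framing computation and is valid here verbatim. In fact you do not need $I$-invariance at all---once the hypothesis hands you $\xi_{st}|_{\s^3\setminus int(N(L))}$, your explicit transport of $s_1=p\mu_2+q\lambda_{c_2}$ through the two round-$1$ gluing maps (yielding $p\mu_1+q\lambda_{c_1}$) is already enough to conclude that the attachment along $\mathbb{T}^2\times\{2\}$ is a contact $m$-surgery on $L_1$.
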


\begin{proof}
From Claim 1 of Lemma~\ref{lem: ResOfStdTightCont}, we obtain $\s^3 \setminus int (N(L))$ with a tight contact structure $\xi_{st}|_{\s^3 \setminus int(N(L))}$ with boundary torus having dividing curves parallel to $\lambda_{c_j}$ on $\dd N(L_{j})$. 
In the round 2-surgery, we glue solid torus along each boundary component $\dd N(L_{j})$ by mapping $\{p\} \times \dd D^2$ to the curve $m\cdot \mu_j + \lambda_{c_j} $. 

For each $L_{j}$, we can realize this gluing as a contact $ m$-surgery along $L_j$. 
Thus, round surgery on a given contact joint pair can be realized as a pair of contact Dehn surgeries with coefficient $m$.
\end{proof}

\begin{defn}
    We call a contact joint pair $L_{1}\cup L_{2}$ {\it nice} if it satisfies the following conditions. 
   \begin{enumerate}
       \item The round 1-surgery coefficients on both components are the same integer $n$ and the round 2-surgery coefficient $m$ on $L_{2}$, where $m\in \{\pm1\}$.
       \item After performing the round 1-surgery and removing thickened torus $N_{\delta_2}(L_{2})\setminus N_{\delta_1}(L_{2})$ the contact structure on $\s^3 \setminus int(N(L))$ is $\xi_{st}|_{\s^3 \setminus int(N(L))}$.
   \end{enumerate}
\end{defn}

With the above definition, we now state the following correspondence between contact round surgeries and contact Dehn surgeries.

\begin{thm}[Contact Bridge Theorem]\label{thm: ContBridgeThm}
The following two statements establish a bridge between a set of contact $(\pm1)$-surgery diagrams and contact round surgery diagrams of nice contact joint pairs. 
\begin{enumerate}
    \item [(1)] For a contact $(\pm1)$-surgery diagram on a Legendrian link $L_1 \cup \cdots \cup L_n \subset \s^3$ of a contact 3-manifold $(M, \xi)$, there is a round surgery diagram consisting of contact joint pairs $L= \bigcup_{i=1}^{n'} (L_{i1}\cup L_{i2})$ with round 1-surgery coefficients $k\in \mathbb{Z} $ on both components $L_{1j}$ and round 2-surgery coefficient $m \in \{\pm1\}$ on $L_{i2}$. 
    \item [(2)]Given a contact round surgery diagram of nice contact joint pairs $L= \bigcup_{i=1}^{n} (L_{1i}\cup L_{i2})$ satisfying the following conditions: 
    \begin{enumerate}
        \item The round 1-surgery coefficient is $k \in  \mathbb{Z}$ on both components, 
        \item and round 2-surgery coefficient is $m\in \{\pm1\}$ on $L_{i2}$. 
    \end{enumerate}
    The Legendrian link $L= \bigcup_{i=1}^{n} (L_{1i}\cup L_{i2})$ determines a contact $(\pm1)$-surgery diagram such that for each $i,j$; $L_{ij}$ has contact surgery coefficient $m$. 
\end{enumerate}
    
\end{thm}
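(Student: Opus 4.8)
The plan is to prove the Contact Bridge Theorem in two directions, using the structural lemmas already established (Lemmas~\ref{lem: RemovedTorusNbd}, \ref{lem: ResOfStdTightCont}, and \ref{lem: RealOfContJointPairAsContDehnSur}) together with Theorem~\ref{thm: SameClassOfR1SD} as the main engines. The key observation tying everything together is Lemma~\ref{lem: RealOfContJointPairAsContDehnSur}: a \emph{nice} contact joint pair $L_{i1}\cup L_{i2}$ (round $1$-surgery coefficient $k\in\mathbb{Z}$ on both components, round $2$-surgery coefficient $m\in\{\pm1\}$, and with the complement carrying the restricted standard tight structure) is \emph{equivalent} to a pair of contact $m$-surgeries, one on each of $L_{i1}$ and $L_{i2}$. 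Thus each nice contact joint pair and a corresponding pair of contact $(\pm1)$-surgeries encode the same contact $3$-manifold. The entire theorem therefore reduces to a careful bookkeeping translation between a list of $(\pm1)$-surgery components and a list of nice joint pairs.

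For direction (2), which is the cleaner one, I would start from a contact round surgery diagram of nice contact joint pairs $L=\bigcup_{i=1}^{n}(L_{i1}\cup L_{i2})$ satisfying hypotheses (a) and (b). First I would invoke Lemma~\ref{lem: ResOfStdTightCont} to guarantee that the specified tight contact structure on each glued $\mathbb{T}^2\times[1,2]$ (boundary slope $k$, minimal-twisting non-rotative, holonomy $0$) produces exactly $\xi_{st}|_{\s^3\setminus \mathrm{int}\,N(L)}$ on the complement; this verifies the niceness condition built into the definition. Then, applying Lemma~\ref{lem: RealOfContJointPairAsContDehnSur} componentwise, I replace each joint pair by the two contact $m$-surgeries it realizes, obtaining a contact $m$-surgery diagram on the Legendrian link $\bigcup_{i,j}L_{ij}$ in which every component carries coefficient $m\in\{\pm1\}$. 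This is precisely the asserted contact $(\pm1)$-surgery diagram, so direction (2) follows directly.

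For direction (1) I would run the argument in reverse. Given a contact $(\pm1)$-surgery diagram on $L_1\cup\cdots\cup L_n$, the goal is to manufacture, for each component $L_i$ with contact coefficient $m_i\in\{\pm1\}$, a nice contact joint pair whose associated pair of contact $m_i$-surgeries reproduces the surgery on $L_i$. The natural construction is to take $L_{i2}$ to be (a Legendrian realization isotopic to) $L_i$ and to introduce a second parallel component $L_{i1}$—a Legendrian pushoff along the contact longitude—so that the two-component link $L_{i1}\cup L_{i2}$ becomes a round $1$-surgery link with a common integer coefficient $k$; by Theorem~\ref{thm: SameClassOfR1SD} the ambiguity in $k$ (it only constrains the difference, which is zero here) does not affect the resulting topology. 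I would then equip the glued thickened torus with the holonomy-$0$ minimal-twisting non-rotative tight structure so that Lemma~\ref{lem: ResOfStdTightCont} forces the complement to inherit $\xi_{st}$, making the pair nice, and set the round $2$-surgery coefficient equal to $m_i$. Running Lemma~\ref{lem: RealOfContJointPairAsContDehnSur} forward then recovers a pair of contact $m_i$-surgeries, and one of the two cancels against the auxiliary component while the other reproduces the original contact $(\pm1)$-surgery on $L_i$; assembling these over all $i$ yields the desired round surgery diagram of nice contact joint pairs.

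The main obstacle I anticipate is \textbf{direction (1)}, specifically ensuring the auxiliary component $L_{i1}$ is chosen so that (i) the complement genuinely carries the \emph{restricted} standard tight structure and not some other tight structure on $\s^3\setminus\mathrm{int}\,N(L)$ (the Remark after Lemma~\ref{lem: Remove_N(L)} warns that boundary slopes alone do not pin this down, so the holonomy-$0$ hypothesis of Lemma~\ref{lem: ResOfStdTightCont} must be deployed with care), and (ii) the extra surgery introduced on $L_{i1}$ is genuinely trivial—i.e. it is a contact $m_i$-surgery that cancels, leaving the net effect on the contact $3$-manifold unchanged. Verifying that the linking data and framings chosen for the pushoff make $L_{i1}\cup L_{i2}$ an honest joint pair (so that the round $2$-surgery is performed on the inner thickened torus parallel to $\dd N(L_{i2})$ as required by the joint-pair convention) is the delicate combinatorial point; once the cancellation is confirmed, the coefficient translation $m=k\cdot 0 + m_i$ and the tight-structure identification are routine.
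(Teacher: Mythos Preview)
Your treatment of direction~(2) is correct and essentially identical to the paper's: both simply invoke Lemma~\ref{lem: RealOfContJointPairAsContDehnSur} on each nice contact joint pair.

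Your approach to direction~(1), however, has a genuine gap. You propose, for each component $L_i$ with contact coefficient $m_i\in\{\pm1\}$, to create a new auxiliary component $L_{i1}$ (a Legendrian pushoff) and form a nice joint pair $L_{i1}\cup L_{i2}$ with round $2$-coefficient $m_i$. By Lemma~\ref{lem: RealOfContJointPairAsContDehnSur} this joint pair is equivalent to contact $m_i$-surgery on \emph{both} $L_{i1}$ and $L_{i2}$. The second of these reproduces the desired surgery on $L_i$, but the first is an \emph{extra} contact $(\pm1)$-surgery on a new knot. You assert this extra surgery ``cancels against the auxiliary component'', but there is nothing for it to cancel against: a contact $(+1)$- or $(-1)$-surgery on a Legendrian knot (pushoff or otherwise) is never a trivial operation on the contact $3$-manifold, and no single Legendrian unknot admits a contact $(\pm1)$-surgery returning $(\s^3,\xi_{st})$. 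So your construction produces a contact round surgery diagram for the \emph{wrong} contact $3$-manifold---one with $n$ extra $(\pm1)$-surgeries performed.

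The paper avoids this by a different mechanism: rather than introducing an auxiliary partner for each $L_i$, it pairs the \emph{existing} components with one another, grouping two $(+1)$-components into a nice joint pair with $m=+1$, and two $(-1)$-components into a nice joint pair with $m=-1$. This introduces no spurious surgeries. The obstruction is parity: one needs an even number of $(+1)$-components and an even number of $(-1)$-components. When the parities are wrong, the paper applies the contact Kirby move of type~1 (Subsection~\ref{subsec: Pre-CFKM}) to insert a contact $(\pm1)$-surgery presentation of $(\s^3,\xi_{st})$ on suitably stabilized Legendrian unknots, adjusting the counts to both be even without changing the contact $3$-manifold. This parity-and-Kirby-move argument is the missing idea in your proposal.
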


\begin{proof}[Proof of (1)]We have a Legendrian link $$L= \left(\bigcup_{i=1}^{n_1}L_{i}^{+1}\right)\cup \left(\bigcup_{j=1}^{n_2}L_{j}^{-1}\right),$$ where we perform contact $(+1)$-surgery on $L_{i}^{+1}$ (and contact $(-1)$-surgery on $L_{j}^{-1}$). 
    We consider the following cases. 
    \begin{enumerate}
        \item The numbers $n_1$ and $n_2$ are even.
        In this case, we pair the $(+1)$-surgery components (and $(-1)$-surgery components) together. 
        We obtain $$L= \bigcup_{i=1}^{\frac{n_1}{2}} (L_{2i-1}^{+1}\cup L_{2i}^{+1}) \bigcup_{j=1}^{\frac{n_2}{2}}(L_{2j-1}^{-1}\cup L_{2j}^{-1}).$$
        We treat the pair $L_{2i-1}^{+1}\cup L_{2i}^{+1}$ (or $L_{2j-1}^{-1}\cup L_{2j}^{-1}$)  as a nice contact joint pair with some contact round 1-surgery coefficient $k\in \mathbb{Z}$ and contact round 2-surgery coefficient $+1$ on $L_{2i}^{+1}$ (or $-1$ on $L_{2j}^{-1}$). 
        We get back the given contact $\pm1$ surgery pair after applying Lemma \ref{lem: RealOfContJointPairAsContDehnSur} on these nice contact joint pairs.

        \item The number $n_1$ is odd but $n_2$ is even. 
        In this case, we add an appropriate contact surgery diagram via the contact Kirby move of type 1 as discussed in Subsection \ref{subsec: Pre-CFKM}. 
        We consider an unknot $U$ with $tb(U)=-2m$ and contact surgery coefficient $2m+1$ on it as shown in Figure \ref{fig:NewDiagram}, for some positive integer $m$.

       We take its contact $(\pm1)$-surgery presentation corresponding to the standard contact 3-sphere. 
        This presentation has one contact $(+1)$-surgery unknot $U$ and $2m$-many contact $(-1)$-surgery unknots. 
        After applying a contact Kirby move of type 1 to the link $L$, we can pair components with $(+1)$-surgery coefficients together by taking two components at a time. 
        We repeat the same process with components carrying $(-1)$-surgery coefficients. Then, we apply the first case.

        \item The number $n_1$ is even but $n_2$ is odd. 
        In this case, we apply contact Kirby moves of type 1 twice by adding appropriate integral contact cosmetic surgery diagrams to the given link $L$. 
        The first diagram consists of an unknot $U_1$ with $tb(U_1)=-2m_1-1$ and contact surgery coefficient $2m_1+2$, and the second diagram is an unknot $U_2$ with $tb(U_2)=-2m_2$ and contact surgery coefficient $2m_2+1$, for some positive integers $m_1$ and $m_2$.
        We add their contact $(\pm1)$-surgery presentation corresponding to $\left(\s^3, \xi_{st}\right)$ to $L$. 
        We obtain $(n_1+ 2)$-many contact $(+1)$-surgery components and  $(n_2+ 2m_1+1 + 2m_2)$-many contact $(-1)$-surgery components.
        We pair components with $(+1)$-surgery coefficients together by taking two components at a time and repeat the same for the components with $(-1)$-surgery coefficients. 
        Then, we apply the first case.

        \item The numbers $n_1$ and $n_2$ are odd. 
        We apply a contact Kirby move of type 1 by adding a contact $(\pm1)$-surgery presentation of unknot $U_1$ with $tb(U_1)=-2m-1$ and contact surgery coefficient $2m+2$ corresponding to $\left(\s^3, \xi_{st}\right)$ for some positive integer $m$.
        Since this presentation has one contact $(+1)$-surgery component and $(2m+1)$-many contact $(-1)$-surgery components, we get an even number of components for both surgery coefficients. 
        We pair components with $(+1)$-surgery coefficients together by taking two components at a time and repeat the same for the components with $(-1)$-surgery coefficients.
        Then, we apply the first case.
    \end{enumerate}
   
\noindent {\it Proof of (2).} We have a Legendrian link $L = \bigcup_{i=1}^{n}(L_{i1} \cup L_{i2})$ of nice contact joint pairs. 
    Then, by Lemma \ref{lem: RealOfContJointPairAsContDehnSur}, each pair can be realized as a pair of contact $m$-surgeries on its components $L_{i1}\cup L_{i2}$ and hence, giving a contact $(\pm1)$-surgery diagram.
\end{proof}

\begin{defn}
    A Legendrian link $L_1 \cup \cdots \cup L_{n}$ in $\left(\s^3, \xi_{st}\right)$ is called a \textit{contact round surgery presentation} of a contact 3-manifold $(M, \xi)$ if 
    \begin{enumerate}
       \item A contact round 1-surgery is performed on a sublink of two components with some integer contact round 1-surgery coefficients on each component and a specified tight contact structure on the glued thickened torus.
        \item A contact round 2-surgery is performed on a link component (which is either in a contact joint pair with some round 1-surgery link or not) with a rational coefficient and a specified choice of tight contact structure on the gluing solid tori. 
        \item $(M, \xi)$ is obtained by performing contact round surgeries of index 1 and 2.
    \end{enumerate}
\end{defn}

Observe that a round surgery presentation of a connected contact 3-manifold has contact round 2-surgery knots in the joint pair with some round 1-surgery links; otherwise, they may yield disconnected components. 
Thus, a round surgery presentation of a connected contact 3-manifold has an even number of components. 
In the following corollary, we get a contact round surgery presentation of any closed, connected, oriented contact 3-manifold by nice contact joint pairs. 

\begin{cor}[Ding--Geiges Theorem for contact round surgery diagrams.]\label{cor: DGThmForCRSD}
Any closed, oriented, connected contact 3-manifold has a contact round surgery presentation in $\left(\s^3, \xi_{st}\right)$.
\end{cor}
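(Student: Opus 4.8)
The plan is to deduce the corollary by chaining together the Ding--Geiges theorem (Theorem~\ref{thm: LegPresnthm}) with part~(1) of the Contact Bridge Theorem (Theorem~\ref{thm: ContBridgeThm}). The former realizes any closed connected contact 3-manifold as a contact $(\pm1)$-surgery diagram on a Legendrian link in $\left(\s^3, \xi_{st}\right)$, while the latter converts such a diagram into a round surgery diagram built entirely out of nice contact joint pairs. Composing these two passages yields precisely a contact round surgery presentation, so the whole argument is a short deduction rather than a fresh construction.

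Concretely, I would proceed in three steps. First, start with a closed, oriented, connected contact 3-manifold $(M, \xi)$ and invoke Theorem~\ref{thm: LegPresnthm} to write it as the result of a contact $(\pm1)$-surgery on some Legendrian link $L_1 \cup \cdots \cup L_n \subset \left(\s^3, \xi_{st}\right)$. Second, feed this diagram into part~(1) of Theorem~\ref{thm: ContBridgeThm}, producing a round surgery diagram consisting of nice contact joint pairs $\bigcup_{i} (L_{i1} \cup L_{i2})$, with an integer round 1-surgery coefficient on each component of a pair and a round 2-surgery coefficient $m \in \{\pm1\}$ on the second component. Third, I would verify that this round surgery diagram satisfies the definition of a contact round surgery presentation and represents $(M, \xi)$: by Lemma~\ref{lem: RealOfContJointPairAsContDehnSur}, performing the round surgery on each nice joint pair recovers exactly the pair of contact $m$-surgeries on its components, so the round surgery along the entire link is equivalent to the original contact $(\pm1)$-surgery and hence yields $(M, \xi)$.

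The step demanding the most care is the parity matching hidden inside Theorem~\ref{thm: ContBridgeThm}(1): the $(\pm1)$-surgery link need not have an even number of $(+1)$-components or of $(-1)$-components, whereas assembling joint pairs forces us to group components two at a time with a uniform surgery sign within each pair. This is the one genuinely delicate point, and it is handled by inserting contact cosmetic surgery diagrams via the contact Kirby move of type~1 (Subsection~\ref{subsec: Pre-CFKM}) to correct the parity of each sign class before pairing, as in the four cases of the proof of the Contact Bridge Theorem. Because the inserted diagrams represent $\left(\s^3, \xi_{st}\right)$, they leave the underlying contact 3-manifold unchanged. Finally, since $(M, \xi)$ is connected and every round 2-surgery knot in the resulting diagram sits in a joint pair with a round 1-surgery link, no disconnected lens space summand is produced, and the resulting presentation is therefore a contact round surgery presentation of $(M, \xi)$ in $\left(\s^3, \xi_{st}\right)$, as required.
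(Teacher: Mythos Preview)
Your proposal is correct and follows essentially the same approach as the paper's own proof: invoke Ding--Geiges (Theorem~\ref{thm: LegPresnthm}) to obtain a contact $(\pm1)$-surgery diagram, then apply part~(1) of the Contact Bridge Theorem (Theorem~\ref{thm: ContBridgeThm}) to convert it into a round surgery diagram of nice contact joint pairs. Your additional remarks on parity matching and connectedness merely unpack details that the paper leaves implicit inside Theorem~\ref{thm: ContBridgeThm}, but the structure of the argument is identical.
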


\begin{proof}
    Recall that any closed, oriented contact 3-manifold can be obtained by a contact $(\pm1)$-surgery on some Legendrian link $L$ by Ding--Gieges' theorem. 
    We use the contact bridge theorem to get corresponding contact round surgery diagrams of nice contact joint pairs. 
\end{proof}

\begin{cor}
    Suppose $L$ denotes a contact round surgery diagram consisting of nice contact joint pairs $\bigcup_{i=1}^{n}(L_{i1}\cup L_{i2})$ with contact round 1-surgery coefficient $n\in \mathbb{Z}$ on $L_{ij}$ and contact round 2-surgery coefficient $-1$ on $L_{i2}$. Then, $L$ describes a symplectically fillable contact 3-manifold.  
\end{cor}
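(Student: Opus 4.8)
The plan is to reduce the statement to the classical fact that contact $(-1)$-surgery --- equivalently, Weinstein's Legendrian surgery --- on a symplectically fillable contact $3$-manifold again produces a symplectically fillable one. The first step is to invoke the Contact Bridge Theorem (Theorem~\ref{thm: ContBridgeThm}(2)), or more directly Lemma~\ref{lem: RealOfContJointPairAsContDehnSur}, to translate the given contact round surgery diagram into a contact Dehn surgery diagram. Since $L=\bigcup_{i=1}^{n}(L_{i1}\cup L_{i2})$ is a diagram of nice contact joint pairs with integer round $1$-surgery coefficient on both components and round $2$-surgery coefficient $-1$ on each $L_{i2}$, Lemma~\ref{lem: RealOfContJointPairAsContDehnSur} (with $m=-1$) realizes each joint pair as a pair of contact $(-1)$-surgeries on its two components $L_{i1}$ and $L_{i2}$. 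Hence the entire diagram corresponds to a contact $(-1)$-surgery along the Legendrian link $\bigcup_{i=1}^{n}(L_{i1}\cup L_{i2})$ in $\left(\s^3,\xi_{st}\right)$.

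Next I would recall that contact $(-1)$-surgery along a Legendrian knot coincides with Weinstein's Legendrian surgery, as established by Gompf (cf.~\cite{Gompf-HBC_SteinSurface}). At the level of fillings, performing contact $(-1)$-surgery along a Legendrian knot $K$ lying in the boundary of a symplectic (indeed Stein/Weinstein) filling amounts to attaching a Weinstein $2$-handle along $K$ with framing one less than the contact framing. This yields a Weinstein cobordism from the original contact manifold to the surgered one, so symplectic fillability is preserved under the operation.

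The final step applies this to the base manifold. The standard contact sphere $\left(\s^3,\xi_{st}\right)$ bounds the standard symplectic $4$-ball $(B^4,\omega_{st})$, which is a Stein, hence symplectic, filling. Attaching a Weinstein $2$-handle along each component $L_{ij}$ of the Legendrian link extends this filling to a Weinstein filling of the resulting contact $3$-manifold $(M,\xi)$. Therefore $L$ describes a symplectically fillable contact $3$-manifold, as claimed.

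I do not anticipate a genuine obstacle, since all the ingredients are already assembled; the only point requiring care is to confirm that the reduction in the first step produces exactly contact $(-1)$-surgery rather than a mere topological equivalence. This is precisely what Lemma~\ref{lem: RealOfContJointPairAsContDehnSur} guarantees under the \emph{niceness} hypothesis, namely that the intermediate contact structure on $\s^3\setminus\operatorname{int}(N(L))$ is the restriction of $\xi_{st}$, so that gluing back the two solid tori with coefficient $-1$ genuinely realizes contact $(-1)$-surgery on each component. I would also remark that the conclusion can be reached through an alternative route using Adachi's theorem that symplectic round handle attachments preserve strong symplectic fillability, which is consistent with the argument above.
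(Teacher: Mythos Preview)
Your proposal is correct and follows essentially the same route as the paper: apply the Contact Bridge Theorem (or Lemma~\ref{lem: RealOfContJointPairAsContDehnSur}) to convert the nice contact joint pairs into a contact $(-1)$-surgery diagram, and then invoke the standard fact that Legendrian $(-1)$-surgery preserves symplectic fillability via Weinstein $2$-handle attachment to the standard $4$-ball. The paper additionally carries out an explicit topological identification of the pair of round handles with two ordinary $2$-handles before appealing to Weinstein's construction, but this is supplementary detail rather than a different argument.
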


\begin{proof}
    We apply Theorem \ref{thm: ContBridgeThm} on the given Legendrian link $L$. 
    We get a contact surgery diagram consisting of contact $(-1)$-surgery components. 
    Therefore, we obtain a symplectically fillable 3-manifold after performing contact round surgery on the given diagram. 

    In the following, we discuss the construction of a particular symplectic filling of the resulting 3-manifold explicitly.
 First, we give the topological description of the handle attachment corresponding to the surgery diagram given in the hypothesis.
Let $X^4 := \s^3 \times [0,1]$. Then $\dd X^4= \s^3\times \{0,1\}$. 
We consider a topological joint pair $K_1 \cup K_2$ in $\s^3\times \{1\}$ with round $1$-surgery coefficients $n_{1}$ and $n_{2}$, and a round $2$-surgery on $K_{2}$ with an integer round $2$-surgery coefficient $m$.


In the $4$-dimensional setup, we attach a round $1$-handle $R^{1}$, which is a diffeomorphic copy of $\s^1\times \D^1\times \D^2$, to $X^4$ along the attaching region $\s^{1} \times \s^{0} \times \D^{2}$, as shown in Figure~\ref{fig: R1,2H}. 
In Figure~\ref{fig: R1,2H}, we use bricks to model round handles of indices $1$ and $2$. 
Then, we attach a round $2$-handle $R^{2}$, which is a diffeomorphic copy of $\s^{1} \times \D^{2} \times \D^{1}$, along its attaching region $\s^{1} \times \partial \D^{2} \times \D^{1}$.

\begin{figure}[ht]
    \centering
    \includegraphics[width=0.5\linewidth]{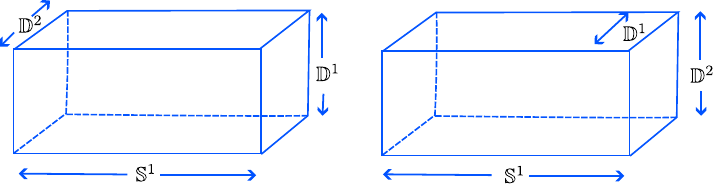}
    \caption{The left and right bricks represent schematic diagrams of a round $1$-handle and a round $2$-handle, respectively. The left and right faces of each brick are identified. The top and bottom faces of each brick indicate the attaching regions of the handles, while the front and back faces represent their belt regions.}
    \label{fig: R1,2H}
\end{figure}

We recall that, in a joint pair, we perform the round $2$-surgery on a thickened neighborhood of a torus parallel to $\partial N(K_{2})$, which is equivalent to attaching $R^{2}$ to the belt region of the round $1$-handle. 
Furthermore, we attach it in a specific way to mimic the effect of the joint pair on the boundary. For this, we note the following.

While performing round $2$-surgery as part of a joint pair, the meridional disk of the gluing solid torus maps to a curve isotopic to the canonical longitude on the boundary $\partial N(K_{i})$ for each $i=1,2$.\\
This implies that we attach $\s^{1} \times \D^{2} \times \D^{1}$ to the belt region $\s^{1} \times \D^{1} \times \s^{1} \subset R^{1}$ by a gluing map 
\[
\phi: \s^{1} \times \partial \D^{2} \times \D^{1} \longrightarrow \s^{1} \times \D^{1} \times \s^{1}
\]
defined by
\[
\phi\left(\{p\} \times \partial \D^{2} \times \{t\}\right) = \s^{1} \times \{t\} \times \{p\},
\]
for $p \in \s^{1}$ and $t \in \D^{1}$. 

For the attachment, we refer to Figure~\ref{fig:round2handle}.

\begin{figure}[ht]
\begin{center}
\includegraphics[width=0.5\linewidth]{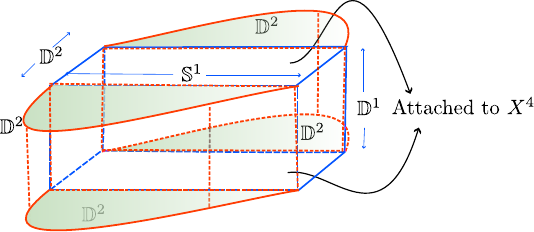}
\caption{Round $2$-handle (shown with a red boundary) attached along the belt region of a round $1$-handle (shown with a blue boundary). 
The top and bottom faces of the round $1$-handle are attached to $X^{4}$.}
\label{fig:round2handle}
\end{center}
\end{figure}

After the attachment of $R^{2}$, we get
\[
\begin{aligned}
X^{\prime} &= (X \cup R^{1}) \cup R^{2}, \quad \text{and} \\
\partial X^{\prime} &= \left[\partial X \setminus \left(\s^{1} \times \D^{2} \times \s^{0}\right)\right] 
\cup \left(\D^{2} \times \s^{1} \times \s^{0}\right).
\end{aligned}
\]

Hence,
\[
\begin{aligned}
X^{\prime} 
&= \left\{ X \bigcup_{\s^{1} \times \s^{0} \times \D^{2}} \left(\s^{1} \times \D^{1} \times \D^{2}\right) \right\} 
\bigcup_{\phi} \left(\s^{1} \times \D^{2} \times \D^{1}\right) \\
&= X \bigcup_{\s^{1} \times \s^{0} \times \D^{2}} 
\left\{ \left(\s^{1} \times \D^{1} \times \D^{2}\right) \bigcup_{\phi} \left(\s^{1} \times \D^{2} \times \D^{1}\right) \right\} \\
&= X \bigcup_{\s^{1} \times \s^{0} \times \D^{2}} 
\left\{ \left(\s^{1} \times \D^{1} \times \D^{2}\right) \bigcup_{\phi} \left(\D^{2} \times \D^{1} \times \s^{1}\right) \right\}.
\end{aligned}
\]

By the gluing map $\phi$ defined above, we obtain
\[
\left\{ \s^{1} \times \D^{1} \times \D^{2} \right\} 
\bigcup_{\phi} 
\left\{ \D^{2} \times \D^{1} \times \s^{1} \right\} 
= \D^{2} \times \D^{2} \times \s^{0}.
\]
Thus,
\[
X^{\prime} = X \bigcup_{\s^{1} \times \s^{0} \times \D^{2}} 
\left( \D^{2} \times \D^{2} \times \s^{0} \right),
\]
that is, it is equivalent to attaching two $2$-handles to $X^{4}$, as shown in Figure~\ref{fig: RealizeAsTwo2-handles}.

\begin{figure}[ht]
    \centering
    \includegraphics[width=0.5\linewidth]{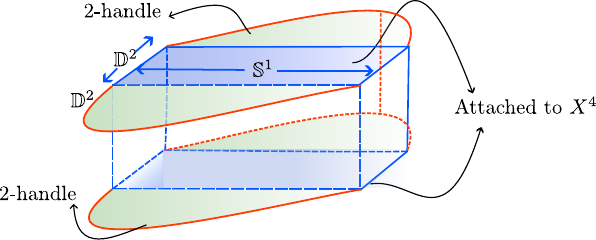}
    \caption{Each of the top and bottom faces of the diagram can be viewed as a $2$-handle attached along the blue-colored subface. 
The green-colored subfaces represent portions of the boundary of the resulting $4$-manifold.}
    \label{fig: RealizeAsTwo2-handles}
\end{figure}

Now consider $X^{4}$ with a symplectic form given by symplectization of the standard contact 3-sphere. 
By Part 2 of the Contact Bridge Theorem (Theorem~\ref{thm: ContBridgeThm}), the contact joint pair with contact round $1$-surgery coefficient $n$ on the Legedrian 
realization of $K_1$ and $K_2$, and contact round $2$-surgery coefficient $(-1)$ on the Legendrian realization of $K_2$ corresponds to contact $(-1)$-surgeries along each component $K_1$ and $K_2$. This is equivalent to performing Weinstein (symplectic) 2-handle attachment to the symplectic manifold $X^{4}$ to obtain $X'$.
Consequently, the symplectic structure extends naturally to the resulting $4$-manifold $X'$. Hence, the boundary manifold $\dd X'$ together with the induced contact structure is symplectically fillable.
\end{proof}

\bibliographystyle{plain}
\bibliography{Reference}

\end{document}